\theoremstyle{plain}
\newtheorem{thm}{Theorem}
  \theoremstyle{plain}
  \newtheorem{lem}{Lemma}
\newtheorem{remark}{Remark}
\newtheorem{define}{Definition}
\newtheorem{prob}{Problem}
\newtheorem{assumption}{Assumption}
\algnewcommand{\LeftComment}[1]{\Statex \(\triangleright\) #1}
\newcommand{\multiline}[1]{%
	\begin{tabularx}{\dimexpr\linewidth-\ALG@thistlm}[t]{@{}X@{}}
		#1
	\end{tabularx}
}
\begin{document}
	\addtolength{\voffset}{0.1in}
\title{Resilient Structural Stabilizability of Undirected Networks}
\author{Jingqi Li, Ximing Chen, S{\'e}rgio Pequito, George J. Pappas, Victor M. Preciado
	\thanks{This research was supported in part by the US National Science Foundation under the grant CAREER-ECCS-1651433.
		
		Jingqi Li, Ximing Chen, George J. Pappas, and Victor M. Preciado are with the Department of Electrical and Systems Engineering	at the University of Pennsylvania, Philadelphia, PA 19104.  e-mail: \{jingqili, ximingch, pappasg, preciado\}@seas.upenn.edu. S{\'e}rgio Pequito is with the Department of Industrial and Systems Engineering at the Rensselaer Polytechnic Institute, Troy, NY 12180-3590. email: goncas@rpi.edu.
}}
\maketitle
\begin{abstract}
In this paper, we consider the \emph{structural stabilizability} problem of undirected networks. More specifically, we are tasked to infer the stabilizability of an undirected network from its underlying topology, where the undirected networks are modeled as continuous-time linear time-invariant (LTI) systems involving symmetric state matrices. Firstly, we derive a graph-theoretic necessary and sufficient condition for structural stabilizability of undirected networks. Then, we propose a method to infer the maximum dimension of stabilizable subspace solely based on the network structure. Based on these results, on one hand, we study the optimal actuator-disabling attack problem, i.e., removing a limited number of actuators to minimize the maximum dimension of stabilizable subspace. We show this problem is NP-hard. On the other hand, we study the optimal recovery problem with respect to the same kind of attacks, i.e., adding a limited number of new actuators such that the maximum dimension of stabilizable subspace is maximized. We prove the optimal recovery problem is also NP-hard, and we develop a $(1-1/e)$ approximation algorithm to this problem.
\end{abstract}

\section{Introduction}
In recent years, the control of networked dynamical systems has attracted a great amount of research interest \cite{ren2005survey,simpson2017voltage,oh2015survey}. It is of particular interest to study the asymptotic stabilizability of network control systems, i.e., the ability ensuring that all the system states can be steered to the origin by injecting proper controls, such as the undirected consensus network \cite{ren2005survey}, voltage stabilization of grids \cite{simpson2017voltage}, and formation control with undirected communication links \cite{oh2015survey}.

The existing results on stabilizability analysis highly rely on the assumption that the system parameters can be exactly acquired, which is often violated in practice,~(-~see~\cite{syrmos1994static,lin2009stability,chapman2015state} and the references therein). It has been shown that the topological structure of a network, which can be obtained accurately, can be exploited to infer the required conditions to ensure the controllability of a network system efficiently \cite{lin1974structural,dion2003generic,liu2011controllability}. This motivates us to investigate the interplay between the network's structure and the stabilizability of a network. 

Assessing the stabilizability from the structural information on the system dynamics model has been an active topic of research \cite{kirkoryan2014decentralized,pichai1984graph,pajic2013topological,pequito2016framework}. {\color{black}However, in \cite{kirkoryan2014decentralized}, the authors assumed no control input and proposed conditions on the sparsity pattern of symmetric state matrices such that a specific sparsity pattern sustains a Hurwitz stable state matrix. }
In addition, the problem considered in \cite{pichai1984graph,pajic2013topological,pequito2016framework} is the arbitrary pole placement through output feedback, which is sufficient but not necessary for the stabilizability.

Stabilizability is a crucial concept in network security \cite{pang2012design} and there has been a tremendous effort invested into the control of networks under malicious attacks \cite{pang2012design,pequito2017robust,liu2013minimum,pequito2015analysis,moothedath2017verifying,zhang2017edge,chen2018minimal,zhu2014performance,de2015input,rai2012vulnerable}. The problems of adding extra actuators/sensors to ensure controllability/observability under attacks are addressed in \cite{pequito2017robust,liu2013minimum}. The problem of maintaining stabilization under the uncertain feedback-channel failure is considered in \cite{pequito2015analysis,moothedath2017verifying}. In \cite{zhang2017edge,chen2018minimal}, the problem of optimal attack/recovery on structural controllability is investigated. Although the problems of stabilization under various attacks such as deception attack \cite{pang2012design}, replay attacks \cite{zhu2014performance}, denial-of-service \cite{de2015input} and destabilizing attacks \cite{rai2012vulnerable}, have been widely studied, the crucial problem of optimal attack against stabilizability by manipulating network topological structure, e.g., removing or adding actuators, has not been fully investigated. Moreover, to the best of the authors' knowledge, our paper considers for the first time the problems of optimal attack and recovery on the stabilizable subspace of a network, i.e., the number of stabilizable states or nodes in a network.

Specifically, in this paper, we consider the structural stabilizability problem, and the contributions of this paper are four-fold. First, we derive a graph-theoretic necessary and sufficient condition for structural stabilizability of undirected networks. Second, we propose graph-theoretic methods to infer the generic dimension of controllable subspace and the maximum stabilizable subspace of an undirected network system. Third, we formulate the optimal actuator-disabling attack problem, where the attacker disables a limited number of actuators such that the maximum stabilizable subspace is minimized. We prove this problem is NP-hard. Finally, we formulate the optimal recovery problem, where a defender activates a limited number of new actuators such that the dimension of the stabilizable subspace is maximized. We prove this problem is NP-hard, and we propose a $(1-1/e)$ approximation algorithm.

The rest of the paper is organized as follows. In Section~\ref{sec:Problems}, we formulate the problems considered in this paper. In Section~\ref{sec:Preliminaries}, we recall several crucial preliminaries. We present the main results in Sections~\ref{sec:Results} and \ref{sec:Design} -- the proofs are relegated to the Appendix. In Section~\ref{sec:Examples}, we present examples to illustrate our results. Finally, Section~\ref{sec:conclusion} concludes this paper.
%
%
%
\section{Problem Formulations}\label{sec:Problems}
We consider networks whose interconnection between states are captured by a symmetric linear time-invariant (LTI) system,  described by
\begin{equation}\label{sys}\small
\dot{x}=Ax+Bu,
\end{equation}
where $x\in\mathbb{R}^n$ and $u\in\mathbb{R}^m$ are state vector and input vector, respectively. We refer to matrices $A=A^\top\in\mathbb{R}^{n\times n}$ and $B\in\mathbb{R}^{n\times m}$ as the state matrix and input matrix, respectively. Hereafter, we use the pair $(A,B)$ to represent the system~\eqref{sys}. 



In order to infer the properties of a system modeled by \eqref{sys} from its structure, we introduce some necessary concepts on structured matrices.
\begin{define}[Structured and Symmetrically Structured Matrices]
	A matrix $\bar{M}\in\{0,\star\}^{n\times m}$ is called a \emph{structured matrix}, if $[\bar{M}]_{ij}$, the $(i,j)$-th entry of $\bar{M}$, is either a fixed zero or an independent free parameter, denoted by $\star.$ In particular, a matrix $\bar{M}\in\{0,\star\}^{n\times n}$ is \emph{symmetrically structured}, if the value of the free parameter associated with $[\bar{M}]_{ji}$ is constrained to be the same as the value of the free parameter associated with $[\bar{M}]_{ij}$, for all $i$ and $j$. 
\end{define}
We refer to $\tilde{M}$ as a \emph{numerical realization} of a (symmetrically) structured matrix $\bar{M}$ if $\tilde{M}$ is a matrix obtained by assigning real numbers to $\star$-parameters in~$\bar{M}$.

Given a pair $(A,B)$, we let the pair $(\bar{A},\bar{B})$ denote the \emph{structural pattern} of the system $(A, B),$ where $\bar{A}\in\{0,\star\}^{n\times n}$ is a symmetrically structured matrix such that $[\bar{A}]_{ij}=\star$ if $[A]_{ij}\ne 0$ and $[\bar{A}]_{ij}=0$ otherwise. The structured matrix $\bar{B}\in\{0,\star\}^{n\times m}$ is defined similarly. 

Recall that a system is stabilizable if and only if the uncontrollable eigenvalues are asymptotically stable \cite[Section 2.4]{wood1986linear}. Hence, to study stabilizability, it is necessary to first investigate controllability. 
Next, we recall the notion of \emph{structural controllability}.
\begin{define}[Structural Controllability~\cite{lin1974structural}]
A structural pair $(\bar{A},\bar{B})$ is structurally controllable if there exists a numerical realization $(\tilde{A},\tilde{B})$ such that the controllability matrix $Q(\tilde{A},\tilde{B}):=[\tilde{B},\tilde{A}\tilde{B},\cdots,\tilde{A}^{n-1}\tilde{B}]$ has full row rank.
\end{define}
Similarly, we define \emph{structural stabilizability} as follows:
\begin{define}[Structural Stabilizability]
A structural pair $(\bar{A},\bar{B})$ is said to be structurally stabilizable if there exists a stabilizable numerical realization $(\tilde{A},\tilde{B})$.
\end{define}

\begin{remark}
Stabilizability is not a generic property \cite{dion2003generic}, yet the structural stabilizability of $(\bar{A},\bar{B})$ implies the existence of a numerical realization $(\tilde{A},\tilde{B})$ such that $(\tilde{A},\tilde{B})$ is stabilizable. It is a necessary condition for the stabilizability of any realization $(\tilde{A},\tilde{B})$ of a structural pair $(\bar{A},\bar{B})$. 
\end{remark}

In the next two subsections, we will be focusing on two different main threads: (\emph{i}) analysis, and (\emph{ii}) design.
\subsection{Analysis of Structural Stabilizability}
In this subsection, we first formulate the problem of characterizing structural stabilizability using only the structural pattern of a pair, as stated below:


\begin{prob}\label{p0}
Given a continuous-time linear time-invariant pair $(A,B)$, we denote by $(\bar{A},\bar{B})$ the structural pattern of $(A,B),$ where $\bar{A}\in\{0,\star\}^{n\times n}$ is symmetrically structured. Find a necessary and sufficient condition such that $(\bar{A},\bar{B})$ is structurally stabilizable.
\end{prob}
In addition to the above problem, we also consider how ``unstabilizable'' a system is, when a system is not stabilizable. To characterize the ``unstabilizability'', we propose using the dimension of the stabilizable subspace of a system, which can be stated as follows:
\begin{define}[Stabilizable Subspace \cite{hautus1980b}]\label{dss}
Given a pair $(A,B)$, where $A\in\mathbb{R}^{n\times n}$ and $B\in\mathbb{R}^{n\times m}$, a subspace $S\subseteq\mathbb{R}^n$ is said to be the stabilizable subspace of $(A,B)$ if for $\forall x(0)\in S$, there exists a control input $u(t)\in\mathbb{R}^m,$ for $t\ge 0$, such that
	\begin{equation*}\small
		\lim_{t\to \infty} x(t)=\mathbf{0}.
	\end{equation*}
\end{define}
As a special case, if a pair $(A,B)$ is stabilizable, then $S=\mathbb{R}^n.$ 
%
Moreover, we aim to determine the maximum dimension of stabilizable subspace, denoted by $\textrm{m-dim}(\bar{A},\bar{B}),$ among all numerical realizations of $(\bar{A},\bar{B}).$ Formally, we can state this problem as follows.
\begin{prob}\label{p1}
Given a structural pair $(\bar{A},\bar{B})$, where $\bar{A}$ is symmetrically structured, find $\emph{m-dim}(\bar{A},\bar{B}).$
\end{prob}

Upon these problems that concern mainly with the analysis of structural stabilizability, we can now focus on the design aspect of these problems in the following subsection. 
\subsection{Optimal Actuator-Attack and Recovery Problems}
Stabilizability plays a key role in network security \cite{pang2012design}. In this paper, we also consider the network resilient problems. More specifically, we assume that an attacker aims to minimize the maximum dimension of the stabilizable subspace by removing a certain amount of actuation capabilities, i.e., inputs. We formalize this problem as follows.
\begin{prob}[Optimal Actuator-disabling Attack Problem]\label{p2}
Consider a stuctural pair $(\bar{A},\bar{B})$, where $\bar{A}\in\{0,\star\}^{n\times n}$ is symmetrically structured, and $\bar{B}\in\{0,\star\}^{n\times m}$ is a structured matrix. Let the set $\Omega$ be $\Omega=[m]$, where $[m]:=\{1,2,\cdots,m\}$. 
Given a budget $k\in\mathbb{N}$, find
\begin{equation}\label{eq:attack}\small
\begin{split}
	\mathcal{J}^*=\arg & \min_{\mathcal{J}\subseteq \Omega} \emph{m--dim}(\bar{A},\bar{B}(\Omega\setminus\mathcal{J}))\\
	\emph{s.t.}&\  |\mathcal{J}|\le k,
	\end{split}
	\end{equation}
	where $\bar{B}(\mathcal{I})\in\{0,\star\}^{n\times |\mathcal{I}|}$ is a matrix formed by the columns of $\bar{B}$ indexed by $\mathcal{I}$, for some $\mathcal{I}\subseteq \Omega$.
\end{prob}
In other words, Problem~\ref{p2} concerns about finding an optimal strategy to attack the stabilizability of a network using a fixed budget. Meanwhile, it is also of interest to consider the perspective of a system's designer (or, defender) that is concerned with the resilience of the network, i.e., how to maximize the dimension of stabilizable subspace by adding actuation capabilities (i.e., inputs) to the system:
\begin{prob}[Optimal Recovery Problem]\label{p3}
Consider a structural pair $(\bar{A},\bar{B})$, where $\bar{A}\in\{0,\star\}^{n\times n}$ is symmetrically structured and $\bar{B}\in\{0,\star\}^{n\times m}$ is structured. Let $\mathcal{U}_{can}$, where $|\mathcal{U}_{can}|=m'$, be the set of candidate inputs that can be added to the system, and let $\bar{B}_{\mathcal{U}_{can}}\in\{0,\star\}^{n\times m'}$ be the structured matrix characterizing the interconnection between new inputs and the states in the system. Given a budget $k\in\mathbb{N}$, find
	\begin{equation}\small
	\begin{split}
	\mathcal{J}^*=\arg & \max_{\mathcal{J}\subseteq[m']}\emph{m-dim}(\bar{A},[\bar{B},\bar{B}_{\mathcal{U}_{can}}(\mathcal{J})])\\
	\emph{s.t.}&\ |\mathcal{J}|\le k,
	\end{split}
	\end{equation}
	where $\bar{B}_{\mathcal{U}_{can}}(\mathcal{J})\in\{0,\star\}^{n\times |\mathcal{J}|}$ is a structured matrix formed by the columns in $\bar{B}_{\mathcal{U}_{can}}$ indexed by $\mathcal{J}$, and $[\bar{B},\bar{B}_{\mathcal{U}_{can}}(\mathcal{J})]$ is the concatenation of $\bar{B}$ and $\bar{B}_{\mathcal{U}_{can}}(\mathcal{J})$.
\end{prob}

By the duality between stabilizability and detectability \cite{wood1986linear}, all the results obtained on stabilizability in this paper can be readily used to characterize detectability.
\section{Preliminaries}\label{sec:Preliminaries}
To present solutions to Problems~1 -- 4, we introduce some relevant notions in structural system theory and graph theory.\vspace{-0.1cm}
\subsection{Structural System Theory}
Consider a (symmetrically) structured matrix $\bar{M}$. Let $n_{\bar{M}}$ be the number of its independent $\star$-parameters and associate with $\bar{M}$ a parameter space $\mathbb{R}^{n_{\bar{M}}}$. 
Let $\mathbf{p}_{\tilde{M}}=(p_1,\dots,p_{n_{\bar{M}}})^\top\in\mathbb{R}^{n_{\bar{M}}}$ to encode the values of the independent $\star$-entries of $\bar{M}$ of a particular numerical realization $\tilde{M}$. In what follows, a set $V\subseteq\mathbb{R}^n$ is called a \emph{variety} if there exist polynomials $\varphi_1,\dots,\varphi_k$, such that $V=\{x\in\mathbb{R}^n\colon \varphi_i(x)=0,\forall i\in[k]\}$, and $V$ is \emph{proper} when~$V\ne \mathbb{R}^n$. We denote by $V^c=\mathbb{R}^n\setminus V$ its complement. 
%
{
	
The \emph{term rank} \cite{murota2012systems} of a (symmetrically) structured matrix $\bar{M}$, denoted as $\textrm{t--rank}(\bar{M})$, is the largest integer $k$ such that, for some suitably chosen distinct rows $\{i_{\ell}\}_{\ell=1}^k$ and distinct columns $\{j_{\ell}\}_{\ell=1}^k$, all of the entries $\{ [\bar{M}]_{i_\ell j_\ell} \}_{\ell = 1}^k$ are $\star$-entries. Additionally, a (symmetrically) structured matrix $\bar{M}\in\{0,\star\}^{n\times m}$ is said to have \emph{generic rank} $k$, denoted as $\textrm{g--rank}(\bar{M})=k$, if there exists a numerical realization $\tilde{M}$ of $\bar{M}$, such that $\textrm{rank}(\tilde{M})=k$. Note that, if $\textrm{g--rank}(\bar{M})>0$, then the set of parameters describing all possible realizations when $\textrm{rank}(\tilde{M})<\textrm{g--rank}(\bar{M})$ form a proper variety, \cite{hosoe1980determination}.
	
	Given a structural pair $(\bar{A},\bar{B})$, where $\bar{A}\in\{0,\star\}^{n\times n}$ is symmetrically structured, $(\bar{A},\bar{B})$ is said to be \emph{irreducible}, if there does not exist a permutation matrix $P$ such that\vspace{-0.1cm}
	\begin{equation}\label{eq}\small
	P\bar{A}P^\top=\begin{bmatrix}
	&\bar{A}_{11} & \mathbf{0}\\
	&\mathbf{0} & \bar{A}_{22}
	\end{bmatrix},P\bar{B}=\begin{bmatrix}
	\bar{B}_{1}\\\mathbf{0}
	\end{bmatrix},\vspace{-0.1cm}
	\end{equation}
	where $\bar{A}_{11}\in\{0,\star\}^{p\times p}$, and $\bar{B}_1\in\{0,\star\}^{p\times m}$.\vspace{-0.1cm}

	
	
	\subsection{Graph Theory}
Given a digraph $\mathcal{D}=\left(\mathcal{V},\mathcal{E}\right)$, a \emph{path} $\mathcal{P}$ in $\mathcal{D}$ is an ordered sequence of distinct vertices $\mathcal{P}=(v_1,\ldots,v_k)$ with $\{v_1,\dots,v_k\}\subseteq \mathcal{V}$ and $(v_{i}, v_{i+1})\in \mathcal{E}$ for all $i=1,\ldots,k-1$. 
Given a set $\mathcal{S}\subseteq\mathcal{V}$, we denote the \emph{in-neighbour set} of $\mathcal{S}$ by $\mathcal{N(S)}=\{v_i\in\mathcal{V}:(v_i,v_j)\in\mathcal{E},v_j\in\mathcal{S}\}$.
	
	%
	%
Given a directed graph $\mathcal{D} = (\mathcal{V}, \mathcal{E})$ and two sets $\mathcal{S}_1, \mathcal{S}_2\subseteq \mathcal{V}$, we define the associated \emph{bipartite graph} of $\mathcal{D}$ by $\mathcal{B}(\mathcal{S}_1, \mathcal{S}_2, \mathcal{E}_{\mathcal{S}_1, \mathcal{S}_2}),$ whose vertex set is $\mathcal{S}_1\cup \mathcal{S}_2$ and edge set is 
	$\mathcal{E}_{\mathcal{S}_1, \mathcal{S}_2} = \{(s_1,s_2)\in\mathcal{E}\colon  s_1\in \mathcal{S}_1, s_2\in \mathcal{S}_2\}.$ Given $\mathcal{B}(\mathcal{S}_1, \mathcal{S}_2, \mathcal{E}_{\mathcal{S}_1, \mathcal{S}_2})$, and a set $\mathcal{S}\subseteq\mathcal{S}_1$ or $\mathcal{S}\subseteq\mathcal{S}_2$, we define the \emph{bipartite neighbor set} of $\mathcal{S}$ as $\mathcal{N_B(S)}=\{j\colon(j,i)\in\mathcal{E}_{\mathcal{S}_1,\mathcal{S}_2},i\in\mathcal{S}\}$. A \emph{matching} $\mathcal{M}$ is a set of edges in $\mathcal{E}_{\mathcal{S}_1, \mathcal{S}_2}$ that do not share vertices, i.e., given edges $e = (s_1, s_2)$ and $e^\prime = (s_1^\prime, s_2^\prime)$, $e,e^\prime \in \mathcal{M}$ only if $s_1 \neq s_1^\prime$ and $s_2 \neq s_2^\prime.$ A matching is said to be \emph{maximum} if it is a matching with the maximum number of edges among all possible matchings. Given a matching $\mathcal{M},$ two vertices $s_1$ and $s_2$ are \emph{matched} if $e=(s_1,s_2) \in \mathcal{M}.$ The vertex $v$ is said to be \emph{right-unmatched} (respectively, left-unmatched) with respect to a matching $\mathcal{M}$ associated with $\mathcal{B}(\mathcal{S}_1, \mathcal{S}_2, \mathcal{E}_{\mathcal{S}_1, \mathcal{S}_2})$ if $v \in \mathcal{S}_2$ (respectively, $v\in\mathcal{S}_1$) and $v$ does not belong to an edge in the matching $\mathcal{M}$. 

Given a structural pair $(\bar{A},\bar{B})$, where $\bar{A}\in\{0,\star\}^{n\times n}$ is symmetrically structured and $\bar{B}\in\{0,\star\}^{n\times m}$ is structured, we associate $(\bar{A},\bar{B})$ with a directed graph $\mathcal{D}(\bar{A},\bar{B})=(\mathcal{X\cup U},\mathcal{E_{X,X}}\cup\mathcal{E_{U,X}})$, where the vertex sets $\mathcal{X}=\{x_i\}_{i=1}^n$ and $\mathcal{U}=\{u_j\}_{j=1}^m$ are the set of state vertices and input vertices, respectively; and the edge set $\mathcal{E}_{\mathcal{X,X}}=\{(x_j,x_i)\colon [\bar{A}]_{ij}=\star\}$ and $\mathcal{E_{U,X}}=\{(u_j,x_i)\colon [\bar{B}]_{ij}=\star\}$ are the set of edges between state vertices and the set of edges between input vertices and state vertices, respectively. We also denote by $\mathcal{D}(\bar{A})=(\mathcal{X},\mathcal{E_{X,X}})$ the digraph associated with the symmetrically structured matrix $\bar{A}$ and a set $\mathcal{S}\subseteq\mathcal{X}$ is called an \emph{independence set} in $\mathcal{D}(\bar{A})$ if $\{x_i,x_j\}\notin\mathcal{E_{X,X}}$ for $\forall x_i,x_j\in\mathcal{S}$. In particular, an independence set $\mathcal{S}$ of a digraph is said to be \emph{maximal} if the number of nodes in $\mathcal{S}$ is maximal over all the independence sets of the digraph. In addition, a state vertex $x_i\in\mathcal{X}$ is said to be (input-)reachable if there exists a path from the input vertex $u_j\in\mathcal{U}$ to it. We also associate $(\bar{A},\bar{B})$ with a bipartite graph $\mathcal{B}(\bar{A},\bar{B})=(\mathcal{X\cup U,X},\mathcal{E_{X,X}}\cup\mathcal{E_{U,X}})$, which we refer to as the \emph{system bipartite graph}.

\section{Analysis of Structural Stabilizability}\label{sec:Results}
In what follows, we have two subsections where we address Problems~\ref{p0} and \ref{p1}. Specifically, in Section~\ref{sec:graph condition}, we obtain Theorem~\ref{con_SS} that characterizes the solutions to Problem~\ref{p0}, whereas in Section~\ref{sec: stabilizable}, Theorem~\ref{dim} gives a characterization of the maximum dimension of stabilizable subspace, which addresses Problem~\ref{p1}.

\subsection{Graph-Theoretic Conditions on Structural Stabilizability}\label{sec:graph condition}
Since the stabilizability concerns the stability of the uncontrollable part of $(A, B),$ it is necessary to first characterize the controllable and uncontrollable parts from the structural information contained in the pair $(\bar{A},\bar{B})$. We recall a lemma from \cite{li2018tc} that characterizes controllable modes for the numerical realizations of a structural pair.
\begin{lem}[\cite{li2018tc}]\label{lem:proper}
	Given a structural pair $(\bar{A},\bar{B})$, where $\bar{A}\in\{0,\star\}^{n\times n}$ is symmetrically structured, and $\textrm{t--rank}(\bar{A})=k$, if $(\bar{A},\bar{B})$ is irreducible, then there exists a proper variety $V\subset \mathbb{R}^{n_{\bar{A}}+n_{\bar{B}}}$, such that for any numerical realization $(\tilde{A},\tilde{B})$ with $[\mathbf{p}_{\tilde{A}},\mathbf{p}_{\tilde{B}}]\in V^c$, $\tilde{A}$ has $k$ nonzero, simple and controllable modes.
\end{lem}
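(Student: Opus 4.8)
The plan is to work in the parameter space $\mathbb{R}^{n_{\bar A}+n_{\bar B}}$ and to build $V$ as a finite union of proper varieties, one for each of the three assertions (exactly $k$ nonzero modes; these modes simple; these modes controllable). The guiding principle is that each ``bad'' set is the zero locus of a polynomial (or of finitely many polynomials) in $(\mathbf{p}_{\tilde A},\mathbf{p}_{\tilde B})$, and such a set is a \emph{proper} variety precisely when it omits at least one point; hence for each property it suffices to exhibit a single numerical realization that witnesses it. Since a finite union of proper varieties is again a proper variety (the product of the corresponding nonzero polynomials is not identically zero), combining the three witnesses yields the desired $V$.

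First I would count the nonzero modes. Because $\bar A$ is symmetrically structured, I claim $\textrm{g--rank}(\bar A)=\textrm{t--rank}(\bar A)=k$: the inequality $\textrm{g--rank}\le\textrm{t--rank}$ is automatic, and a matching of size $k$ in $\mathcal B(\bar A)$ can be turned into a symmetric realization with a nonvanishing $k\times k$ minor by choosing the paired off-diagonal weights and the free diagonal entries generically (this is where symmetry-induced cancellation must be ruled out, and is precisely the reason the claim is restricted to symmetric patterns). By \cite{hosoe1980determination} the set on which $\textrm{rank}(\tilde A)<k$ is then a proper variety. Finally, since $\tilde A=\tilde A^\top$ is diagonalizable with real spectrum, $\textrm{rank}(\tilde A)=k$ forces exactly $k$ nonzero eigenvalues (counted with multiplicity) and an $(n-k)$-dimensional kernel, establishing the count off a proper variety.

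Next I would force simplicity. Writing the characteristic polynomial as $\det(sI-\tilde A)=s^{\,n-k}q(s)$ with $q(0)\ne 0$ on the complement of the previous variety, the nonzero modes are simple exactly when the discriminant $\mathrm{disc}(q)$ does not vanish; this discriminant is a polynomial in $\mathbf{p}_{\tilde A}$. To see it is not identically zero, I would exhibit a realization whose nonzero spectrum consists of $k$ distinct values --- for instance, by using a size-$k$ transversal of $\bar A$ to assemble a symmetric realization that splits on its support into blocks with distinct nonzero eigenvalues. Thus simplicity of all nonzero modes holds off a further proper variety.

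The hard part is controllability of the $k$ nonzero modes, and here irreducibility enters. Irreducibility of $(\bar A,\bar B)$ with $\bar A$ symmetric is equivalent to every state vertex being input-reachable in $\mathcal D(\bar A,\bar B)$ (otherwise a non-reachable, and hence by symmetry edge-isolated, block of states would realize the forbidden permutation form in \eqref{eq}). Given a realization with $k$ simple nonzero modes and eigenvectors $v_1,\dots,v_k$, the PBH test together with symmetry shows that mode $\lambda_i$ is controllable iff $v_i^\top\tilde B\neq 0$. The two technical obstacles are (\emph{i}) rephrasing ``some nonzero mode is uncontrollable'' as the vanishing of a genuine polynomial in $(\mathbf{p}_{\tilde A},\mathbf{p}_{\tilde B})$ --- which I would do via a resultant of $q(s)$ with the numerator polynomials arising from the $\det[\,sI-\tilde A,\ \tilde B\,]$-type minors, so as to avoid the non-polynomial dependence of the $v_i$ on the parameters --- and (\emph{ii}) exhibiting one realization in which all $k$ nonzero modes are simultaneously controllable. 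For (\emph{ii}) I would choose $\tilde A$ generically so that each connected component of $\mathcal D(\bar A)$ carries eigenvectors with full support on that component (using, e.g., a three-term-recurrence argument along a spanning backbone to forbid zero entries), and then use input-reachability to attach, in $\tilde B$, an actuated state to every component, so that $v_i^\top\tilde B\ne 0$ for each nonzero mode. This simultaneous-controllability witness, together with the polynomial reformulation that makes the uncontrollable set a variety, is where I expect the main difficulty to lie; once it is in place, intersecting the three open complements gives the single proper variety $V$ asserted by the lemma.
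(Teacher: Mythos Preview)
The paper does not prove this lemma at all: it is quoted from \cite{li2018tc} and used as a black box, so there is no ``paper's own proof'' to compare against. Your outline is nonetheless a faithful reconstruction of how such results are established in the structural-systems literature --- assemble the exceptional set as a finite union of proper varieties (rank drop of $\tilde A$; repeated nonzero eigenvalues via the discriminant of the reduced characteristic polynomial; an uncontrollable nonzero mode via a resultant/PBH encoding), and certify properness of each by exhibiting a single witness realization. Your identification of irreducibility with input-reachability of every state, and the use of that reachability to manufacture a controllability witness, is exactly the right mechanism.

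That said, several steps you flag as routine are where the real work in \cite{li2018tc} lies. The equality $\textrm{g--rank}(\bar A)=\textrm{t--rank}(\bar A)$ under the symmetry constraint is not a one-line matching argument: because paired off-diagonal entries carry the \emph{same} parameter, monomials in the minor expansion can cancel, and one typically needs a cycle decomposition of the transversal (self-loops, $2$-cycles, longer cycles) to show the relevant principal minor is generically nonzero. Likewise, your simplicity witness (``blocks with distinct nonzero eigenvalues'') and your controllability witness (eigenvectors supported on an entire component via a three-term recurrence along a spanning backbone) are plausible but must be made compatible with an \emph{arbitrary} symmetric pattern, including branch vertices and components lacking a Hamiltonian path; this is precisely what the explicit spectral constructions in the cited reference address. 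None of this invalidates your plan --- it is the correct skeleton --- but the proposal as written is an outline, not a proof.
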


Lemma~\ref{lem:proper} shows that the irreducibility of $(\bar{A},\bar{B})$ guarantees that all the non-zero modes of $(\tilde{A},\tilde{B})$ are controllable generically. Subsequently, 
we can claim that given an irreducible pair $(\bar{A},\bar{B})$, if for any numerical realization $(\tilde{A},\tilde{B})$ there exists an uncontrollable eigenvalue, then that uncontrollable eigenvalue is $0.$ This implies that $(\tilde{A},\tilde{B})$ is not stabilizable. Therefore, if a pair $(\bar{A},\bar{B})$ is irreducible but not structurally controllable, then $(\bar{A},\bar{B})$ is not structurally stabilizable. Hence, we have the following lemma.
\begin{lem}\label{lem_2}
	Given an irreducible structural pair $(\bar{A},\bar{B})$, where $\bar{A}\in\{0,\star\}^{n\times n}$ is symmetrically structured, then $(\bar{A},\bar{B})$ is structurally stabilizable if and only if $(\bar{A},\bar{B})$ is structurally controllable.
\end{lem}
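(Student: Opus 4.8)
The plan is to prove the two implications separately, with the reverse direction being immediate and the forward direction handled by contraposition.

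For the ``if'' direction (structurally controllable $\Rightarrow$ structurally stabilizable), I would simply observe that controllability is strictly stronger than stabilizability: any controllable numerical realization $(\tilde{A},\tilde{B})$ is, a fortiori, stabilizable, so the existence of a controllable realization immediately yields a stabilizable one. This requires no graph-theoretic work and does not even use irreducibility.

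The substance lies in the ``only if'' direction, which I would prove by contraposition: assuming $(\bar{A},\bar{B})$ is not structurally controllable, I would show that \emph{no} numerical realization is stabilizable. The engine is the generic rank of the augmented structured matrix $[\bar{A},\bar{B}]$. First I would argue that $\textrm{g--rank}([\bar{A},\bar{B}])<n$. Indeed, if instead $\textrm{g--rank}([\bar{A},\bar{B}])=n$, then for generic parameters one has $\textrm{rank}([\tilde{A},\tilde{B}])=n$, so the zero mode of $\tilde{A}$ is controllable by the Popov--Belevitch--Hautus (PBH) eigenvector test; combined with Lemma~\ref{lem:proper}, which guarantees that the nonzero modes of a generic irreducible realization are controllable, every mode would be controllable on the complement of a proper variety, forcing structural controllability --- a contradiction.

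Having established $\textrm{g--rank}([\bar{A},\bar{B}])<n$, I would then invoke the fact (recalled in the preliminaries, where realizations of deficient rank form a proper variety) that the generic rank is the maximum rank attained over all realizations, so that $\textrm{rank}([\tilde{A},\tilde{B}])<n$ for \emph{every} realization. Hence there exists a nonzero $w\in\mathbb{R}^n$ with $w^\top[\tilde{A},\tilde{B}]=0$, i.e.\ $w^\top\tilde{A}=0$ and $w^\top\tilde{B}=0$; by the symmetry of $\tilde{A}$ the vector $w$ is an eigenvector for the eigenvalue $0$, and by the PBH test this eigenvalue is uncontrollable. Since $0$ does not lie in the open left half-plane, the stabilizability criterion (uncontrollable modes must be asymptotically stable) shows $(\tilde{A},\tilde{B})$ is not stabilizable. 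As this holds for every realization, $(\bar{A},\bar{B})$ is not structurally stabilizable, completing the contrapositive.

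The step I expect to be the main obstacle is ensuring the conclusion holds for \emph{all} realizations rather than merely generic ones: Lemma~\ref{lem:proper} only controls the modes on the complement of a proper variety, so a naive argument leaves the measure-zero exceptional set untreated. The clean way around this is precisely to route the argument through $\textrm{g--rank}([\bar{A},\bar{B}])$ and exploit that the generic rank upper-bounds the rank of every realization, which forces an uncontrollable zero eigenvalue \emph{uniformly} and sidesteps any delicate case analysis on the exceptional variety.
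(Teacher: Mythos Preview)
Your proposal is correct and follows essentially the same approach as the paper's proof: contraposition to show that failure of structural controllability forces $\textrm{g--rank}([\bar{A},\bar{B}])<n$, and hence an uncontrollable zero eigenvalue in every realization via the PBH test. The only cosmetic difference is that the paper obtains $\textrm{g--rank}([\bar{A},\bar{B}])<n$ directly from the graph-theoretic criterion in \cite{li2018tc} (the existence of $\mathcal{S}\subseteq\mathcal{X}$ with $|\mathcal{N}(\mathcal{S})|<|\mathcal{S}|$), whereas you deduce it by contradiction via Lemma~\ref{lem:proper}.
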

While Lemma~\ref{lem_2} is a condition for structural stabilizability when $(\bar{A},\bar{B})$ is irreducible, we should also consider the case when $(\bar{A},\bar{B})$ is reducible. By the definition of reducibility, $(\bar{A},\bar{B})$ can be permuted to the form of~\eqref{eq}. 
In order for $(\bar{A}, \bar{B})$ to be structurally stabilizable, it is required that there exists a numerical realization $\tilde{A}_{22}$ whose eigenvalues of are all negative. Summarizing these two arguments, it is equivalent to say that whether there exists a negative definite numerical realization $\tilde{A}_{22}$ determines whether the structural pair is stabilizable. Consequently, it is important to determine when the above claim is true, as follows.

\begin{lem}\label{lem_1}
Given a reducible structural pair $(\bar{A},\bar{B})$, where $\bar{A}\in\{0,\star\}^{n\times n}$ is in the form of ~\eqref{eq}. Then there exists a numerical realization $\tilde{A}_{22}$ which is negative definite if and only if the diagonal entries of $\bar{A}_{22}$ are all $\star$-entries.
\end{lem}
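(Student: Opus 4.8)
The plan is to prove the two implications separately, after recording one structural observation: since $\bar{A}$ is symmetrically structured and the block-diagonal form in~\eqref{eq} is obtained via a symmetric permutation $P\bar{A}P^\top$, the diagonal block $\bar{A}_{22}$ is itself symmetrically structured. This is what allows the symmetry constraint to be respected in the construction below.

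For the necessity direction, I would suppose that $\tilde{A}_{22}$ is a negative definite realization of $\bar{A}_{22}$ and test with the standard basis vectors. For each index $i$ one has $e_i^\top \tilde{A}_{22} e_i = [\tilde{A}_{22}]_{ii} < 0$, so in particular $[\tilde{A}_{22}]_{ii}\neq 0$. If $[\bar{A}_{22}]_{ii}$ were a fixed zero, then every realization---including $\tilde{A}_{22}$---would satisfy $[\tilde{A}_{22}]_{ii}=0$, contradicting negative definiteness. Hence $[\bar{A}_{22}]_{ii}=\star$ for every $i$, i.e.\ all diagonal entries are $\star$-entries.

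For the sufficiency direction, assuming all diagonal entries of $\bar{A}_{22}$ are $\star$-entries, I would exhibit an explicit negative definite realization. Since a numerical realization may assign \emph{any} real number---including $0$---to a $\star$-parameter, I set every diagonal $\star$-parameter to $-1$ and every off-diagonal $\star$-parameter to $0$. Because $\bar{A}_{22}$ is symmetrically structured, the off-diagonal free parameters occur in symmetric pairs, so assigning each such pair the value $0$ respects the symmetry constraint and yields an admissible realization. The resulting matrix is $\tilde{A}_{22}=-I$, which is symmetric and negative definite, completing this direction.

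The argument is short, and the only points requiring care are (\emph{i}) confirming that $\bar{A}_{22}$ inherits the symmetric structure from $\bar{A}$, so that the constructed realization is admissible, and (\emph{ii}) justifying that $0$ is a permissible value for a $\star$-parameter, which follows directly from the paper's definition of a numerical realization. I do not anticipate a genuine obstacle here; if one preferred a realization with nonzero off-diagonal entries, a diagonally dominant choice (large negative diagonal entries and small off-diagonal magnitudes) would work equally well, but the diagonal construction $\tilde{A}_{22}=-I$ is the most economical.
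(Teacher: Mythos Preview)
Your proposal is correct and matches the paper's own proof essentially line for line: the paper also constructs a diagonal realization (zeros off-diagonal, negative values on the diagonal) for sufficiency, and for necessity tests with the standard basis vector $v_i$ to obtain $v_i^\top \tilde{A}_{22} v_i = [\tilde{A}_{22}]_{ii}$, concluding that a fixed zero on the diagonal precludes negative definiteness. Your additional remarks on symmetry inheritance and the admissibility of assigning $0$ to a $\star$-parameter are reasonable clarifications but do not change the argument.
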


Combining Lemmas~\ref{lem_2} and~\ref{lem_1}, we have an algebraic condition for structurally stabilizability. In what follows, we present a graph-theoretic interpretation of these conditions. 
\begin{thm}
	\label{con_SS}
	Consider a structural pair $(\bar{A},\bar{B})$, where $\bar{A}$ is symmetrically structured. Let $\mathcal{D}(\bar{A},\bar{B})=(\mathcal{X}\cup\mathcal{U},\mathcal{E_{X,X}}\cup\mathcal{E_{U,X}})$ be the digraph associated with $(\bar{A},\bar{B})$, and $\mathcal{X}_r\subseteq\mathcal{X}$ and $\mathcal{X}_u\subseteq\mathcal{X}$ be the subset of state vertices which are input-reachable and input-unreachable, respectively. The $(\bar{A},\bar{B})$ is structurally stabilizable if and only if the following two conditions hold simultaneously in $\mathcal{D}(\bar{A},\bar{B})$:
	\begin{enumerate}
		\item the vertex $x_i$ has a self-loop, $\forall x_i\in\mathcal{X}_u$;
		\item $|\mathcal{N}(\mathcal{S})|\ge|\mathcal{S}|$, $\forall \mathcal{S}\subseteq \mathcal{X}_r$.
	\end{enumerate}
\end{thm}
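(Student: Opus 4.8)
The plan is to reduce the statement to Lemmas~\ref{lem_2} and~\ref{lem_1} by exploiting the fact that, since $\bar{A}$ is symmetric, the state digraph $\mathcal{D}(\bar{A})$ is effectively undirected: an edge $(x_j,x_i)$ is present exactly when $(x_i,x_j)$ is. Consequently the input-unreachable set $\mathcal{X}_u$ is a union of connected components of $\mathcal{D}(\bar{A})$ untouched by any input, and there are no $\bar{A}$-edges joining $\mathcal{X}_r$ and $\mathcal{X}_u$. First I would show that this is precisely the dichotomy behind irreducibility: $(\bar{A},\bar{B})$ is irreducible if and only if $\mathcal{X}_u=\emptyset$, and in the reducible case relabelling the states so that $\mathcal{X}_r$ comes first puts the pair into the decoupled form~\eqref{eq}, with $\bar{A}_{11}$ supported on $\mathcal{X}_r$ and $\bar{A}_{22}$ supported on $\mathcal{X}_u$. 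Moreover the subpair $(\bar{A}_{11},\bar{B}_1)$ has all of its states input-reachable, hence is itself irreducible.

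Next I would split stabilizability across the two blocks. Because the decoupled form makes the dynamics on $\mathcal{X}_r$ and $\mathcal{X}_u$ evolve independently, and because no input reaches $\mathcal{X}_u$, every mode of any realization $\tilde{A}_{22}$ is uncontrollable; so a realization of $(\bar{A},\bar{B})$ is stabilizable exactly when (i) the realization of the reachable block $(\tilde{A}_{11},\tilde{B}_1)$ is stabilizable and (ii) $\tilde{A}_{22}$ is Hurwitz, which for the symmetric $\tilde{A}_{22}$ means negative definite. Since the $\star$-parameters of the two blocks are independent, structural stabilizability of $(\bar{A},\bar{B})$ is equivalent to the conjunction of ``$(\bar{A}_{11},\bar{B}_1)$ is structurally stabilizable'' and ``$\bar{A}_{22}$ admits a negative-definite realization.'' The second clause is handled directly by Lemma~\ref{lem_1}: a negative-definite $\tilde{A}_{22}$ exists iff every diagonal entry of $\bar{A}_{22}$ is a $\star$-entry, i.e.\ iff every $x_i\in\mathcal{X}_u$ carries a self-loop, which is exactly condition~1.

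It remains to characterize structural stabilizability of the irreducible block $(\bar{A}_{11},\bar{B}_1)$ by condition~2. Here I would invoke Lemma~\ref{lem_2} to replace stabilizability by structural controllability, and then Lemma~\ref{lem:proper} to reduce controllability to a single generic-rank test: irreducibility makes all nonzero modes generically simple and controllable, so the only possible obstruction is the zero mode, which by the PBH test is controllable iff $[\tilde{A}_{11},\tilde{B}_1]$ has full row rank. Generically this rank equals $\textrm{g--rank}([\bar{A}_{11},\bar{B}_1])=\textrm{t--rank}([\bar{A}_{11},\bar{B}_1])$, the size of a maximum matching saturating the state side in the system bipartite graph. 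By Hall's theorem such a matching exists iff $|\mathcal{N}(\mathcal{S})|\ge|\mathcal{S}|$ for every $\mathcal{S}\subseteq\mathcal{X}_r$; since $\bar{A}$ has no edges between $\mathcal{X}_r$ and $\mathcal{X}_u$, the in-neighbor set $\mathcal{N}(\mathcal{S})$ is the same whether computed in the block or in the full digraph, so this is exactly condition~2. Taking the intersection of the generic parameter sets supplied by Lemma~\ref{lem:proper} and by the rank test yields a controllable realization when condition~2 holds, whereas failure of condition~2 forces the rank deficiency for \emph{every} realization; combined with the $\bar{A}_{22}$ analysis this gives both directions.

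I expect the delicate step to be the last one: justifying that the symmetry constraint on $\tilde{A}_{11}$ does not lower the attainable rank of $[\tilde{A}_{11},\tilde{B}_1]$ below its term rank, so that Hall's condition genuinely certifies generic controllability rather than merely an upper bound. I would address this by assigning the free diagonal and paired off-diagonal parameters of $\bar{A}_{11}$ together with the independent entries of $\bar{B}_1$ so as to realize any chosen saturating matching, an argument already implicit in the symmetric structural-controllability results of \cite{li2018tc} that underlie Lemma~\ref{lem:proper}.
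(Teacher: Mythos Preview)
Your proposal is correct and follows essentially the same route as the paper: decompose into the reachable block $(\bar{A}_{11},\bar{B}_1)$ and the unreachable block $\bar{A}_{22}$, handle the latter with Lemma~\ref{lem_1} to obtain condition~1, and handle the former with Lemma~\ref{lem_2} to reduce structural stabilizability to structural controllability, which is then equated with condition~2. The only difference is one of detail: the paper's proof simply asserts that condition~2 is equivalent to structural controllability of $(\bar{A}_{11},\bar{B}_1)$ by appeal to Theorem~1 of \cite{li2018tc}, whereas you spell this out via Lemma~\ref{lem:proper}, the PBH test at the zero mode, the identification $\textrm{g--rank}=\textrm{t--rank}$, and Hall's theorem---including the (appropriate) caveat about the symmetry constraint not depressing the attainable rank.
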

Essentially, to ensure structural stabilizability, two conditions should hold simultaneously: (\emph{i}) every unreachable state vertex should have a self-loop, and (\emph{ii}) the reachable part of the system should be structurally controllable~\cite{li2018tc}. 

Next, we utilize Theorem~1 to characterize the maximum dimension of the stabilizable subspace.

\subsection{Maximum Dimension of Stabilizable Subspace}\label{sec: stabilizable}
Similar to the previous subsection, we will first consider the case when $(\bar{A},\bar{B})$ is irreducible, then extend the solution approach to the general case.

By Lemma~\ref{lem_2}, when $(\bar{A},\bar{B})$ is irreducible, the $(\bar{A},\bar{B})$ is structurally controllable if and only if it is structurally stabilizable. This motivates us to consider the relationship between controllable subspace and stabilizable subspace. Moreover, it is shown in \cite{hosoe1980determination} that the maximum dimension of controllable subspace is equal to the generic dimension of controllable subspace of a structural pair without symmetric parameter constraints. We may suspect that equality also holds when symmetric parameter dependency is considered. Motivated by this intuition, we first study the generic dimension of the controllable subspace, and then extend the derived results to obtain a solution of Problem~\ref{p1}.

Given a structured pair $(\bar{A},\bar{B})$, where $\bar{A}$ is symmetrically structured, if there exists a proper variety $V\subset\mathbb{R}^{n_{\bar{A}}+n_{\bar{B}}}$, such that $\textrm{rank}(Q(\tilde{A},\tilde{B}))=k$ when $[\mathbf{p}_{\tilde{A}},\mathbf{p}_{\tilde{B}}]\in V^c$, then we say the \emph{generic dimension} \cite{hosoe1980determination} of controllable subspace of $(\bar{A},\bar{B})$, denoted as $d_c$, is $k$. For almost all numerical realizations $(\tilde{A},\tilde{B})$ with $[\mathbf{p}_{\tilde{A}},\mathbf{p}_{\tilde{B}}]\in\mathbb{R}^{n_{\bar{A}}+n_{\bar{B}}}$ (except for a proper variety, e.g., $[\mathbf{p}_{\tilde{A}},\mathbf{p}_{\tilde{B}}]\in V$), the dimension of controllable subspace is $d_c$. 

We characterize the generic dimension of controllable subspace of a structural pair involving a symmetrically structured matrix by the following lemma.
	\begin{lem}\label{theo6}
		Given an irreducible structural pair $(\bar{A},\bar{B})$, where $\bar{A}\in\{0,\star\}^{n\times n}$ is symmetrically structured and $\bar{B}\in\{0,\star\}^{n\times m}$ is structured, the generic dimension of controllable subspace equals to the term rank of $[\bar{A},\bar{B}]$, i.e., the concatenation of matrices $\bar{A}$  and~$\bar{B}$.
\end{lem}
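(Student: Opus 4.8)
The plan is to prove the identity through two inequalities, both routed through the augmented matrix $[\bar{A},\bar{B}]$. Throughout, write $k=\textrm{t--rank}(\bar{A})$ and $t=\textrm{t--rank}([\bar{A},\bar{B}])$, and recall that $d_c$ is the generic value of $\textrm{rank}(Q(\tilde{A},\tilde{B}))=\dim\mathcal{C}$, where $\mathcal{C}$ denotes the controllable subspace of a generic realization $(\tilde{A},\tilde{B})$. The first, easy direction is $d_c\le t$, and it needs neither symmetry nor irreducibility: since $\tilde{A}^{j}\tilde{B}=\tilde{A}(\tilde{A}^{j-1}\tilde{B})$ for every $j\ge 1$, each block $\textrm{Im}(\tilde{A}^{j}\tilde{B})$ with $j\ge 1$ lies in $\textrm{Im}(\tilde{A})$, whence $\mathcal{C}=\textrm{Im}(\tilde{B})+\sum_{j\ge 1}\textrm{Im}(\tilde{A}^{j}\tilde{B})\subseteq \textrm{Im}(\tilde{A})+\textrm{Im}(\tilde{B})=\textrm{Im}([\tilde{A},\tilde{B}])$. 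Consequently $\dim\mathcal{C}\le \textrm{rank}([\tilde{A},\tilde{B}])\le t$ for every realization, since the rank of any realization of a structured matrix is bounded by its term rank, and therefore $d_c\le t$.

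The reverse direction $d_c\ge t$ is where symmetry and Lemma~\ref{lem:proper} enter. I would first show that, for a generic realization, $\dim\mathcal{C}=\textrm{rank}([\tilde{A},\tilde{B}])$ holds exactly. Because $\tilde{A}=\tilde{A}^\top$ is symmetric it is diagonalizable and satisfies $\textrm{Im}(\tilde{A})=(\ker\tilde{A})^\perp$. By Lemma~\ref{lem:proper}, a generic $\tilde{A}$ has $k$ nonzero, simple, controllable eigenvalues, so $\dim\ker\tilde{A}=n-k$. Decomposing $\mathcal{C}=\bigoplus_{\lambda}\textrm{Im}(P_\lambda\tilde{B})$ over the spectral projections $P_\lambda$, each nonzero controllable simple mode contributes exactly $1$ (its eigenspace is one-dimensional and is reached), for a total of $k$, while the zero eigenvalue contributes $\textrm{rank}(P_0\tilde{B})$, where $P_0$ is the orthogonal projection onto $\ker\tilde{A}$. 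Hence $\dim\mathcal{C}=k+\textrm{rank}(P_0\tilde{B})$. On the other hand, splitting $\textrm{Im}(\tilde{B})$ into its components along $\textrm{Im}(\tilde{A})=(\ker\tilde{A})^\perp$ and $\ker\tilde{A}$ gives $\textrm{rank}([\tilde{A},\tilde{B}])=\dim(\textrm{Im}\,\tilde{A}+\textrm{Im}\,\tilde{B})=k+\textrm{rank}(P_0\tilde{B})$ as well. Thus $\dim\mathcal{C}=\textrm{rank}([\tilde{A},\tilde{B}])$ generically, and the problem reduces to proving that the generic rank of $[\tilde{A},\tilde{B}]$ equals its term rank $t$.

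This last reduction, $\textrm{g--rank}([\bar{A},\bar{B}])=\textrm{t--rank}([\bar{A},\bar{B}])$, is the main obstacle, because the symmetric dependency among the $\star$-entries of $\bar{A}$ can in principle cancel the term-rank monomial in the relevant minor, an effect that never occurs for unconstrained structured matrices. To overcome it I would fix a maximum matching $\mathcal{M}$ of the system bipartite graph $\mathcal{B}(\bar{A},\bar{B})$ with $|\mathcal{M}|=t$ and study the $t\times t$ minor it selects. Every row matched through an input column carries a distinct free $\star$-parameter of $\bar{B}$ occurring in no other position of the matrix; these factors cannot cancel and effectively fix those rows and columns, leaving a minor supported on $\bar{A}$ whose matching has size $k$. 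For that residual part I would invoke Lemma~\ref{lem:proper}, which already guarantees $\textrm{g--rank}(\bar{A})=\textrm{t--rank}(\bar{A})=k$ for the irreducible symmetric pattern, so a symmetric realization rendering the $\bar{A}$-minor nonsingular exists. Combining the two, the full $t\times t$ minor is generically nonzero, giving $\textrm{g--rank}([\bar{A},\bar{B}])\ge t$, and the reverse inequality is immediate.

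Putting the pieces together, for a generic realization (avoiding the union of the proper varieties coming from Lemma~\ref{lem:proper} and from the rank deficiency of $[\tilde{A},\tilde{B}]$) one obtains $d_c=\dim\mathcal{C}=\textrm{rank}([\tilde{A},\tilde{B}])=t=\textrm{t--rank}([\bar{A},\bar{B}])$, which is the claim. The delicate point to verify carefully is precisely the third paragraph: that the freeness of the $\bar{B}$-columns, together with the irreducibility of the pair, genuinely rules out symmetric cancellation in the combined minor, so that the augmented pattern attains its term rank generically despite the symmetry constraint on $\bar{A}$.
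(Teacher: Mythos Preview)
Your first two paragraphs are correct and in fact more informative than what the paper does: the upper bound $d_c\le t$ via $\mathcal{C}\subseteq\mathrm{Im}([\tilde A,\tilde B])$ is clean, and the spectral argument showing that, generically under Lemma~\ref{lem:proper}, $\dim\mathcal{C}=\mathrm{rank}([\tilde A,\tilde B])$ is a nice reduction. The paper does not go through this; it instead observes that $\textrm{t--rank}([\bar A,\bar B])=t$ yields a target set $\mathcal{T}$ of size $t$ satisfying the Hall condition, and then invokes Theorem~2 of \cite{li2018tc} (structural target controllability for symmetric state matrices) as a black box to conclude that generically $\mathrm{rank}\,Q(\tilde A,\tilde B)\ge t$. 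So the two routes are genuinely different: yours is spectral and self-contained up to the last step, the paper's is combinatorial and outsources the hard part.

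The genuine gap is in your third paragraph. From Lemma~\ref{lem:proper} you correctly extract $\textrm{g--rank}(\bar A)=k$, but that only says \emph{some} $k\times k$ minor of $\bar A$ is generically nonsingular; it does not say that the \emph{particular} $k\times k$ block $\bar A[R_A,C_A]$ singled out by your chosen matching is. Even if you start from a maximum matching of $\bar A$ that does sit on a nonsingular minor and then augment through the $\bar B$-columns, the augmenting paths can move the $\bar A$-rows around, so you lose control of which $\bar A$-submatrix you end up with. Moreover, that residual submatrix is in general \emph{not} principal (its row and column index sets differ), so the symmetry of $\bar A$ gives you no direct handle on it, and Lemma~\ref{lem:proper} says nothing about such off-diagonal blocks. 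This is exactly the ``symmetric cancellation'' difficulty you flag at the end, and it is not resolved by the ingredients you list. In the paper this difficulty is absorbed entirely into the cited target-controllability theorem of \cite{li2018tc}, whose proof is where the symmetric-parameter bookkeeping actually lives; if you want your route to be complete, you would need to either import that result or reproduce an argument of comparable strength for the augmented pattern $[\bar A,\bar B]$.
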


When $(\bar{A},\bar{B})$ is reducible, we can permute $(\bar{A},\bar{B})$ to obtain the form in \eqref{eq}. By Definition~\ref{dss} and Theorem~\ref{con_SS}, the maximum dimension of the stabilizable subspace should be the sum of the generic dimension of controllable subspace and the maximum number of negative eigenvalues over all the numerical realizations of the uncontrollable part. This can be formalized in the following result. 

\begin{thm}\footnote{Please note that Theorem 2 in the ACC version of this paper \cite{li2019acc} is not correct and has been revised here.}
	\label{dim}
	Consider a structural pair $(\bar{A},\bar{B})$, where $\bar{A}\in\{0,\star\}^{n\times n}$ is symmetrically structured. Then,
	\begin{enumerate}
		\item if $(\bar{A},\bar{B})$ is irreducible, then the maximum dimension of stabilizable subspace of $(\bar{A},\bar{B})$ equals to the generic dimension of controllable subspace of $(\bar{A},\bar{B})$;
		\item {\color{black}if $(\bar{A},\bar{B})$ is reducible, then we permute the matrix $\bar{A}$ into the form~\eqref{eq}. 
		Let $\{\mathcal{C}_i\}_{i=1}^{k}$ be a set of disjoint cycles in $\mathcal{D}(\bar{A}_{22})$. On one hand, we have
		\begin{equation}
		\begin{aligned}
		\emph{m-dim}(\bar{A},\bar{B})\ge&\emph{t--rank}([\bar{A}_{11},\bar{B}_{1}])+\dfrac{1}{2}\sum_{i\in\mathcal{S}_e}|\mathcal{C}_i|\\& +\dfrac{1}{2}\sum_{i\in\mathcal{S}_o}\left(|\mathcal{C}_i|+1\right),
		\end{aligned}
		\end{equation}
		
		where $\mathcal{S}_e$ is the set of indexes of cycles with even length in $\{\mathcal{C}_i\}_{i=1}^k$and $\mathcal{S}_o$ is the set of indexes of cycles with odd length in $\{\mathcal{C}_i\}_{i=1}^k$; On the other hand, we have
		\begin{equation}
			\emph{m-dim}(\bar{A},\bar{B})\le \emph{t-rank}([\bar{A}_{11},\bar{B}_{1}])+(n-p)-|\mathcal{S}|,
		\end{equation}
		where $\mathcal{S}$ is a maximal independence set in $\mathcal{D}(\bar{A}_{22})$ and $\bar{A}_{22}\in\{0,\star\}^{(n-p)\times(n-p)}$.}
	\end{enumerate}
\end{thm}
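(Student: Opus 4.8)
For statement~1 the plan is to show $\textrm{m-dim}(\bar A,\bar B)=d_c$, the generic dimension of the controllable subspace. The inequality $\textrm{m-dim}(\bar A,\bar B)\ge d_c$ is immediate: a generic realization has a controllable subspace of dimension $d_c$, and every controllable mode can be steered to the origin, so that subspace is stabilizable. For the reverse inequality I would use Lemma~\ref{lem:proper}: irreducibility forces every nonzero mode of a generic realization to be controllable, so any uncontrollable mode of a symmetric realization sits at the eigenvalue $0$ and is not asymptotically stable. Hence the stabilizable subspace cannot be enlarged beyond the controllable one by stable uncontrollable modes, giving $\textrm{m-dim}(\bar A,\bar B)\le d_c$. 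Combining this with the symmetric counterpart of Hosoe's identity (maximum controllable dimension equals generic controllable dimension)~\cite{hosoe1980determination} and with Lemma~\ref{theo6} yields $\textrm{m-dim}(\bar A,\bar B)=d_c=\textrm{t--rank}([\bar A,\bar B])$.

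For statement~2 I would first reduce to a single scalar quantity. Writing $\bar A$ in the permuted form~\eqref{eq}, a realization has block-diagonal dynamics $\dot x_1=\tilde A_{11}x_1+\tilde B_1u$ and $\dot x_2=\tilde A_{22}x_2$, the second block being uncontrolled. By Definition~\ref{dss} a state $(x_1,x_2)$ is stabilizable if and only if $x_1(0)$ lies in the stabilizable subspace of $(\tilde A_{11},\tilde B_1)$ and $x_2(0)$ lies in the negative eigenspace of the symmetric matrix $\tilde A_{22}$; since the two blocks carry independent $\star$-parameters, the stabilizable subspace is the (orthogonal) direct sum of these pieces and the two maxima are attained by a common realization. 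The reachable pair $(\bar A_{11},\bar B_1)$ is irreducible, because all of its state vertices are input-reachable, so statement~1 gives that its contribution is $\textrm{t--rank}([\bar A_{11},\bar B_1])$. Hence $\textrm{m-dim}(\bar A,\bar B)=\textrm{t--rank}([\bar A_{11},\bar B_1])+\nu^\ast$, where $\nu^\ast$ is the largest number of strictly negative eigenvalues over symmetric realizations of $\bar A_{22}$, and it remains to bound $\nu^\ast$.

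To lower-bound $\nu^\ast$ I would exhibit one realization: pick the vertex-disjoint cycles $\{\mathcal C_i\}$ in $\mathcal D(\bar A_{22})$, set every $\star$-entry outside these cycles (chords and self-loops included) to zero so that $\tilde A_{22}$ is block diagonal with one block per cycle, and count negative eigenvalues blockwise. On a length-$\ell$ cycle realized with zero diagonal, an even cycle is bipartite, so its spectrum is symmetric about $0$ and generic edge weights give $\ell/2$ negative eigenvalues; for an odd cycle I start from the path realization, whose spectrum is symmetric with $(\ell-1)/2$ negative eigenvalues and a single simple zero mode, and then give the closing edge a small weight whose sign makes the quadratic form of the zero eigenvector negative, pushing that mode strictly below $0$ to obtain $(\ell+1)/2$ negatives. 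Summing over the cycles yields the stated lower bound on $\nu^\ast$. For the upper bound, let $\mathcal S$ be a maximal independence set of $\mathcal D(\bar A_{22})$: its vertices carry neither mutual edges nor self-loops, so the principal submatrix indexed by $\mathcal S$ vanishes in \emph{every} realization, and Cauchy's interlacing theorem then forces the $|\mathcal S|$ largest eigenvalues of $\tilde A_{22}$ to be nonnegative. Thus every realization has at most $(n-p)-|\mathcal S|$ negative eigenvalues, i.e. $\nu^\ast\le (n-p)-|\mathcal S|$, completing the two bounds.

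The main obstacle is the inertia computation for the odd cycle in the lower bound, which is exactly where the earlier (erroneous) version of this theorem needs correcting: I must verify that the zero mode of the path realization is simple, so that first-order eigenvalue perturbation applies, and that a sufficiently small closing-edge weight sends this mode strictly negative without dragging any of the $(\ell-1)/2$ positive eigenvalues down to $0$---a guarantee supplied by continuity of the spectrum. A secondary difficulty, shared with statement~1, is the clean justification that the stabilizable subspace of an irreducible pair never exceeds the generic controllable dimension for \emph{any} realization (not merely generic ones), since special realizations may trade controllable directions for stable uncontrollable modes; the point to establish is that this trade is never strictly profitable, which I would argue from Lemma~\ref{lem:proper} together with the interlacing bound above.
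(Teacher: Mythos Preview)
Your proposal is correct and follows essentially the same approach as the paper: identical block decomposition in the reducible case, Cauchy interlacing via a maximal independent set for the upper bound, and a block-diagonal cycle construction for the lower bound (the paper defers the odd-cycle inertia count to Lemma~2 of \cite{li2018tc} rather than giving your explicit path-plus-perturbation argument). One simplification worth noting for your flagged ``secondary difficulty'' in statement~1: rather than invoking Lemma~\ref{lem:proper} together with interlacing, the paper observes directly that $\textrm{rank}([\tilde A,\tilde B])\le \textrm{t--rank}([\bar A,\bar B])=k$ for \emph{every} realization, so the left nullspace of $[\tilde A,\tilde B]$ always furnishes at least $n-k$ uncontrollable zero eigenmodes---capping the stabilizable dimension at $k$ without any genericity appeal.
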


	\begin{remark}
	In the form~\eqref{eq}, the index of columns of $\bar{A}_{11}$ are corresponding to input-reachable state vertices in $\mathcal{D}(\bar{A},\bar{B})$, and the index of columns of $\bar{A}_{22}$ are corresponding to the input-unreachable state vertices in $\mathcal{D}(\bar{A},\bar{B})$. The input-reachable/unreachable vertices can be identified by running a depth-first search \cite{awerbuch1985new}. Besides, the term-rank of $([\bar{A}_{11},\bar{B}_{1}])$ can be obtained by finding a maximum bipartite matching in $\mathcal{B}(\bar{A},\bar{B})$ \cite{li2018tc}.
\end{remark}
\section{Optimal Actuator-Attack and Recovery Problems}\label{sec:Design}
In this section, equipped with the results from Section~\ref{sec:Results}, we show the NP-hardness of Problem~\ref{p2} and Problem~\ref{p3} in Theorem~\ref{nphard} and Theorem~\ref{nphard2}, respectively. Then, we introduced a greedy algorithm to solve Problem~\ref{p3} -- see Algorithm~\ref{greedy}. Besides, we show that Algorithm~\ref{greedy} achieves a $(1-1/e)$ approximation guarantee to the optimal solution of Problem~\ref{p3}, which is formally captured in Theorem~\ref{them_alg}.
\subsection{Computational Complexity of Problem~\ref{p2}}\label{sec3}
Suppose that there is no self-loop in the system digraph of a structural pair $(\bar{A},\bar{B})$ and the Condition-2) in Theorem~\ref{con_SS} is satisfied. Then, we will show that Problem~\ref{p2} is equivalent to minimizing the number of input-reachable states by removing a limited number of inputs. 
This problem shares the similarities with Min-k-Union problem described next.
\begin{define}[Min-k-Union Problem~\cite{vinterbo2002note}]\label{df:mku}
	Given a universe $\mathcal{U_S}=\{\mathcal{S}_{\ell}\}_{\ell=1}^p$ and an integer $k\in\mathbb{Z}^+$, find 
	\begin{equation}\small
	\begin{split}
	\mathcal{L^*}=&\arg \min_{\mathcal{L}=\{\ell_i\}_{i=1}^k} |\bigcup_{i=1}^k\mathcal{S}_{\ell_i}|\\
	&\emph{ s.t. }\  \mathcal{L}\subseteq [p].
	\end{split}
	\end{equation}
\end{define}

Therefore, we aim at selecting a limited number of sets whose union is minimized, leading to the following result.
\begin{thm}\label{nphard}
The Optimal Actuator-disabling Attack Problem (Problem~\ref{p2}) is NP-hard.
\end{thm}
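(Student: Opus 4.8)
The plan is to establish NP-hardness by a polynomial-time reduction from the Min-k-Union problem (Definition~\ref{df:mku}, \cite{vinterbo2002note}). The bridge is the observation stated just before the theorem: when the system digraph has no self-loops and Condition-2) of Theorem~\ref{con_SS} holds, the maximum dimension of the stabilizable subspace coincides with the number of input-reachable state vertices, so that Problem~\ref{p2} collapses to removing at most $k$ inputs to minimize the number of reachable states. First I would justify this reduction of the objective using Theorems~\ref{con_SS} and~\ref{dim}: since Condition-2) renders the reachable part $\bar{A}_{11}$ structurally controllable, it contributes its full dimension $\textrm{t-rank}([\bar{A}_{11},\bar{B}_{1}])=|\mathcal{X}_r|$ to $\textrm{m-dim}(\bar{A},\bar{B})$, whereas the unreachable states carry no self-loops and (by construction) no edges among themselves, so by the bounds in Theorem~\ref{dim} they contribute nothing to the stabilizable subspace. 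Hence $\textrm{m-dim}(\bar{A},\bar{B})=|\mathcal{X}_r|$, and minimizing it is identical to minimizing the reachable count.

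Second, I would encode a Min-k-Union instance $\mathcal{U}_S=\{\mathcal{S}_\ell\}_{\ell=1}^p$ over ground set $\mathcal{E}$ into such a pair: create one state $x_e$ for each $e\in\mathcal{E}$, one input $u_\ell$ for each set $\mathcal{S}_\ell$, place an edge $(u_\ell,x_e)$ exactly when $e\in\mathcal{S}_\ell$, and take $\bar{A}$ with zero diagonal and no state--state edges. This construction is clearly polynomial, and the set of input-reachable states remaining after removing a subset $\mathcal{J}$ of inputs is precisely $\bigcup_{\ell\in\Omega\setminus\mathcal{J}}\mathcal{S}_\ell$, so the reachable count equals the cardinality of the union of the retained sets.

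Third, I would align the budgets. Removing more inputs can only shrink the reachable set, so the attacker always exhausts the budget; with attack budget $k_{\mathrm{att}}=p-k$ the attacker retains exactly $k$ inputs, i.e.\ selects $k$ sets, and minimizing the reachable count is exactly minimizing $|\bigcup_{i=1}^k\mathcal{S}_{\ell_i}|$. Thus an optimal actuator-disabling attack yields an optimal Min-k-Union solution and conversely, so a polynomial-time algorithm for Problem~\ref{p2} would solve Min-k-Union, and NP-hardness follows.

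The main obstacle I anticipate is rigorously certifying the identity $\textrm{m-dim}(\bar{A},\bar{B})=|\mathcal{X}_r|$ for every input-removal that can arise, not merely for the intact system. Concretely, I must verify that (i) the reachable part remains structurally controllable after inputs are deleted, i.e.\ Condition-2)/Hall's condition is inherited by the residual reachable subgraph, which I would enforce by the construction (for instance, by attaching to each $x_e$ a private auxiliary state so that a saturating matching always exists without introducing any self-loop); and (ii) the induced subgraph on the newly unreachable states contributes zero stable modes, which via Theorem~\ref{dim} requires that it contain no cycle and no self-loop, so that its only symmetric realization is singular with no negative eigenvalue. Reconciling requirement (i), which tends to demand extra edges, with requirement (ii), which forbids edges among unreachable vertices, is the delicate point: if a controllability gadget forces a (necessarily even) $2$-cycle among uncovered vertices, then by Theorem~\ref{dim} the clean identity becomes $\textrm{m-dim}(\bar{A},\bar{B})=|\bigcup_{\text{retained}}\mathcal{S}_\ell|+c$ for a fixed constant $c$ depending only on $|\mathcal{E}|$, which still preserves the equivalence with Min-k-Union and hence the NP-hardness conclusion.
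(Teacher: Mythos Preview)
Your proposal is correct and converges to precisely the construction the paper uses. The paper commits from the outset to the ``private auxiliary state'' gadget you arrive at while discussing obstacle~(i): for a Min-$k$-Union instance on a ground set of size $n$ with sets $\{\mathcal{S}_\ell\}_{\ell=1}^p$, it creates $2n$ state vertices, pairs each $x_i$ with $x_{i+n}$ via a $2$-cycle (no self-loops, no other state--state edges), and wires $u_\ell\to x_j$ for $j\in\mathcal{S}_\ell$; the attack budget is $p-k$. Your anticipated formula with the additive constant is exactly what falls out: with $r=|\bigcup_{\ell\text{ retained}}\mathcal{S}_\ell|$, the reachable block is irreducible with $\textrm{t-rank}([\bar A_{11},\bar B_1])=2r$, while each of the $n-r$ unreachable $2$-cycles contributes one negative eigenvalue by Theorem~\ref{dim}, so $\textrm{m-dim}(\bar A,\bar B(\Omega\setminus\mathcal J))=2r+(n-r)=n+r$, i.e.\ your constant is $c=|\mathcal E|$. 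The only part of your write-up to tighten is the initial ``no state--state edges'' version, which you already flag as failing Condition~2); once you adopt the gadget, both Hall's condition on the reachable part and the contribution from the unreachable $2$-cycles are immediate, and the reduction goes through exactly as in the paper.
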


Although the problem is NP-hard, that does not imply that all instances of the problem are equally difficult. As a consequence, we now propose to characterize the approximability of Problem~\ref{p2}. 
We first consider a subclass of instances of Problem~\ref{p2}, which satisfy the following assumption.
\begin{assumption}\label{asm:perfect matching}{\color{black}
	The symmetrically structured matrix $\bar{A}\in\{0,\star\}^{n\times n}$ is such that for any $\mathcal{S\subseteq X}$, where $\mathcal{X}$ is the set of state vertices in the state digraph $\mathcal{D}(\bar{A})$, $|\mathcal{N(S)}|\ge|\mathcal{S}|$.}
\end{assumption}

Assumption~\ref{asm:perfect matching} ensures that in the bipartite graph associated with $\mathcal{D}(\bar{A})$, there is no right-unmatched vertex with respect to any maximum matching, i.e., the Condition-2) in Theorem~\ref{con_SS} is always satisfied. 
We then have the following theorem.



\begin{thm}\label{thm:partial reduction}
{\color{black}Under Assumption~\ref{asm:perfect matching}, denote by $m_1$ the total number of sets (i.e., $\{\mathcal{S}_i\}_{i=1}^{m_1}$) in an instance of Min-k-Union problem, and $m_2$ the total number of candidate inputs in an instance of Problem~\ref{p2}. Additionally, let $\rho:\mathbb{Z}\rightarrow \mathbb{R}$. Then, there exists a $\rho(m_2)$-approximation algorithm for Problem~\ref{p2} if there exists a $\rho(m_1)$-approximation algorithm for Min-k-Union problem.}
\end{thm}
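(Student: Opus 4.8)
The plan is to build an approximation-preserving reduction in the direction opposite to the one used for Theorem~\ref{nphard}: starting from an arbitrary instance of Problem~\ref{p2} that satisfies Assumption~\ref{asm:perfect matching}, I would transform it into a Min-k-Union instance, invoke the hypothesized $\rho(m_1)$-approximation there, and pull the returned selection back to an actuator-removal set whose objective value is within a $\rho(m_2)$ factor of optimal. The first step is to express the objective of Problem~\ref{p2} purely through reachability. For each input $u_i$ let $\mathcal{R}_i\subseteq\mathcal{X}$ be the set of state vertices reachable from $u_i$ in $\mathcal{D}(\bar{A},\bar{B})$; when the inputs indexed by $\mathcal{J}$ are removed, the input-reachable set is exactly $R(\mathcal{J})=\bigcup_{i\in\Omega\setminus\mathcal{J}}\mathcal{R}_i$, so the cardinality $r(\mathcal{J})=|R(\mathcal{J})|$ is the \emph{only} quantity through which $\mathcal{J}$ enters the optimization.

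Next I would show that, under Assumption~\ref{asm:perfect matching}, the value m-dim$(\bar{A},\bar{B}(\Omega\setminus\mathcal{J}))$ is an increasing affine function of $r(\mathcal{J})$. The Hall-type inequality in Assumption~\ref{asm:perfect matching} forces Condition-2 of Theorem~\ref{con_SS} to hold on the reachable part for every $\mathcal{J}$, so by Theorem~\ref{dim} that part is structurally controllable and contributes its full dimension $r(\mathcal{J})$ to the stabilizable subspace. Since the in-neighbours of any unreachable state are themselves unreachable, the same inequality restricted to $\mathcal{X}_u$ endows $\bar{A}_{22}$ with a perfect matching; the lower and upper bounds of Theorem~\ref{dim}(2) then coincide (a cover by disjoint two-cycles matched against a maximum independence set of size $(n-r(\mathcal{J}))/2$), pinning the unreachable contribution. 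Hence m-dim$(\bar{A},\bar{B}(\Omega\setminus\mathcal{J}))=\alpha\,r(\mathcal{J})+\beta$ for constants $\alpha>0$ and $\beta\ge 0$ independent of $\mathcal{J}$, so minimizing m-dim is equivalent to minimizing $r(\mathcal{J})$, recovering the equivalence stated before Theorem~\ref{nphard}.

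With the objective reduced to $r(\mathcal{J})$, the reduction is immediate. I would take the universe $\mathcal{U_S}=\{\mathcal{R}_i\}_{i=1}^{m_2}$, so the number of sets equals the number of candidate inputs, $m_1=m_2$, and set the Min-k-Union parameter to $m_2-k$: retaining $m_2-k$ inputs corresponds to selecting $m_2-k$ of the sets $\mathcal{R}_i$, and minimizing $r(\mathcal{J})$ corresponds exactly to minimizing the size of their union. Because both objectives are monotone in the number of retained sets, the ``at most $k$'' and ``exactly $k$'' formulations share their optima, so the two optimal values agree; denote the common optimal reachability count by $r^{*}$. Running the assumed $\rho(m_1)$-approximation returns a selection of union size at most $\rho(m_1)\,r^{*}=\rho(m_2)\,r^{*}$. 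Translating this back into a removed set $\mathcal{J}$ and applying the affine identity gives m-dim $\le \alpha\rho(m_2)r^{*}+\beta\le \rho(m_2)\bigl(\alpha r^{*}+\beta\bigr)=\rho(m_2)\cdot\text{m-dim}^{*}$, where the middle inequality uses $\beta\ge 0$ together with $\rho(m_2)\ge 1$. This is precisely a $\rho(m_2)$-approximation for Problem~\ref{p2}.

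The step I expect to be the crux is the affine reduction of m-dim to the reachability count. That the reachable block always attains full controllable dimension follows directly from Assumption~\ref{asm:perfect matching} and Theorem~\ref{con_SS}; the delicate point is forcing the upper and lower bounds of Theorem~\ref{dim}(2) to meet on $\bar{A}_{22}$, which is exactly where the perfect-matching structure guaranteed by the assumption is indispensable. The remaining bookkeeping --- that a positive scaling $\alpha$ and a nonnegative additive constant $\beta$ leave the multiplicative guarantee intact because $\rho\ge 1$ --- is routine but should be verified explicitly, since it is the reason the additive term $\beta$ does not degrade the ratio.
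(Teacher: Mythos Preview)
Your overall strategy is right and matches the paper's: reduce an instance of Problem~\ref{p2} under Assumption~\ref{asm:perfect matching} to a Min-$k$-Union instance so that the objective of Problem~\ref{p2} becomes an increasing affine function of the union size, then carry the $\rho$-approximation through that affine map (using $\rho\ge 1$ to absorb the nonnegative additive constant). The gap is in the specific choice of sets.

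You take $\mathcal{R}_i$ to be the set of state vertices reachable from $u_i$ and then claim that $\text{m-dim}$ is affine in $r(\mathcal J)=\bigl|\bigcup_{i\notin\mathcal J}\mathcal R_i\bigr|$. Your justification---that under Assumption~\ref{asm:perfect matching} the unreachable block $\bar A_{22}$ admits a cover by disjoint two-cycles and has a maximum independence set of size $(n-r)/2$, so the two bounds in Theorem~\ref{dim}(2) meet at $(n-r)/2$---does not hold. Assumption~\ref{asm:perfect matching} only guarantees a perfect matching in the \emph{bipartite} representation of $\bar A_{22}$ (a permutation with cycles of arbitrary length), not a decomposition into $2$-cycles, and it says nothing about the independence number. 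Concretely, let $\bar A$ be the disjoint union of a $K_5$ component and a $C_4$ component, both with zero diagonal (both satisfy Assumption~\ref{asm:perfect matching}), with $u_1$ actuating a vertex of $K_5$ and $u_2$ a vertex of $C_4$. One has $\text{m-dim}(\bar A_{K_5},0)=4$ (the all-ones off-diagonal realization has spectrum $4,-1,-1,-1,-1$) and $\text{m-dim}(\bar A_{C_4},0)=2$. Retaining only $u_1$ gives $r=5$ and $\text{m-dim}=5+2=7$; retaining only $u_2$ gives $r=4$ and $\text{m-dim}=4+4=8$. Hence minimizing $r$ (your reduction) selects $u_2$, whereas minimizing $\text{m-dim}$ selects $u_1$: $\text{m-dim}$ is not a function of $r(\mathcal J)$ alone, let alone affine, and your Min-$k$-Union instance does not solve Problem~\ref{p2}.

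The paper repairs exactly this step. Instead of counting reachable \emph{vertices}, it builds for each input $u_j$ a set $\mathcal S_j$ whose cardinality encodes the \emph{gain} $|\mathcal D_i|-\text{m-dim}(\bar A_{\mathcal D_i},0)$ from each SCC $\mathcal D_i$ that $u_j$ reaches. With those sets one obtains the exact identity $\text{m-dim}(\bar A,\bar B(\mathcal J))=\text{m-dim}(\bar A,0)+\bigl|\bigcup_{j\in\mathcal J}\mathcal S_j\bigr|$, which is affine with nonnegative offset $\text{m-dim}(\bar A,0)$; your final inequality chain then goes through verbatim with $\alpha=1$ and $\beta=\text{m-dim}(\bar A,0)$.
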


{\color{black}As a result of Theorem \ref{thm:partial reduction}, any approximation algorithm solving Min-k-Union problem can be adapted to solve Problem~\ref{p2} with approximation guarantees. }

\subsection{Solution to Problem~\ref{p3}}\label{sec4}
To investigate the computation complexity of obtaining a solution to Problem~\ref{p3}, we take a similar strategy to that used in the previous section, i.e., we first consider the following special instance: the pair $(\bar{A},\bar{B})$ satisfies Assumption~1. In this case, we will show that Problem~\ref{p3} is equivalent to adding a limited number of actuators to maximize the total number of input-reachable state vertices, which is similar to the Max-k-Union problem, stated as follows.
\begin{define}[Max-k-Union Problem \cite{hochbaum1997approximating}]
	Given a universe $\mathcal{U_S}=\{\mathcal{S}_{\ell}\}_{\ell=1}^p$ and an integer $k\in\mathbb{Z}^+$, find
	\begin{equation}\small
	\begin{split}
		\mathcal{L}^*=&\arg \max_{\mathcal{L}=\{\ell_i\}_{i=1}^k} |\bigcup_{i=1}^k \mathcal{S}_{\ell_i}|\\
		&\emph{ s.t. }\mathcal{L}\subseteq[p].
	\end{split}
	\end{equation}
\end{define}
Thus, we obtain the following theorem.
\begin{thm}\label{nphard2}
	The Optimal Recovery Problem (Problem~\ref{p3}) is NP-hard.
\end{thm}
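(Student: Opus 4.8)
The plan is to prove NP-hardness by a polynomial-time reduction from the Max-k-Union problem, which is NP-hard. Because the reduction will produce instances satisfying Assumption~\ref{asm:perfect matching}, it suffices to establish hardness for this special subclass of Problem~\ref{p3}, and the analysis is then driven entirely by the exact value of $\textrm{m--dim}$ supplied by Theorem~\ref{dim}.

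Given a Max-k-Union instance with universe elements $\{e_1,\dots,e_q\}$, sets $\{\mathcal{S}_\ell\}_{\ell=1}^p$, and budget $k$, I would build a structural pair as follows. For each element $e_j$ introduce two state vertices $x_j,x_j'$ joined by a single symmetric edge, so that $\bar{A}$ is block-diagonal with blocks $\left[\begin{smallmatrix}0&\star\\\star&0\end{smallmatrix}\right]$ and has no self-loops; take $\bar{B}$ to have no columns (no pre-existing inputs), and introduce one candidate input $u_\ell$ for each set $\mathcal{S}_\ell$, connecting $u_\ell$ to $x_j$ exactly when $e_j\in\mathcal{S}_\ell$, with budget $k$. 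Since $\bar{A}$ is a disjoint union of edges, $|\mathcal{N}(\mathcal{S})|\ge|\mathcal{S}|$ holds for every $\mathcal{S}\subseteq\mathcal{X}$, so Assumption~\ref{asm:perfect matching} is met and Condition~2) of Theorem~\ref{con_SS} always holds; consequently the input-reachable part is structurally controllable for every choice of inputs.

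The key step is to evaluate $\textrm{m--dim}(\bar{A},\bar{B}_{\mathcal{U}_{can}}(\mathcal{J}))$ via Theorem~\ref{dim}. Choosing the inputs indexed by $\mathcal{J}$ renders a pair $(x_j,x_j')$ input-reachable precisely when $e_j$ is covered by $\bigcup_{\ell\in\mathcal{J}}\mathcal{S}_\ell$, since the edge propagates reachability from $x_j$ to $x_j'$. The reachable part then contributes $\textrm{t--rank}([\bar{A}_{11},\bar{B}_1])=2\,|\textrm{covered}|$, while each unreachable pair is an even (length-$2$) cycle in $\mathcal{D}(\bar{A}_{22})$. The main obstacle, and the crux of the proof, is showing that the lower and upper bounds of Theorem~\ref{dim} coincide on this gadget: for disjoint length-$2$ cycles the lower bound contributes $\tfrac{1}{2}\sum_{i\in\mathcal{S}_e}|\mathcal{C}_i|=|\textrm{uncovered}|$, while the upper bound contributes $(n-p)-|\mathcal{S}|=2\,|\textrm{uncovered}|-|\textrm{uncovered}|=|\textrm{uncovered}|$, because a maximal independence set of $\mathcal{D}(\bar{A}_{22})$ picks exactly one vertex per pair. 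Hence both bounds collapse to $\textrm{m--dim}=2\,|\textrm{covered}|+|\textrm{uncovered}|=|\textrm{covered}|+q$.

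Since $q$ is a fixed constant, maximizing $\textrm{m--dim}$ subject to $|\mathcal{J}|\le k$ is equivalent to maximizing the coverage $|\bigcup_{\ell\in\mathcal{J}}\mathcal{S}_\ell|$, i.e., to solving the original Max-k-Union instance. The construction is manifestly polynomial, so an efficient exact algorithm for Problem~\ref{p3} would yield one for Max-k-Union; therefore Problem~\ref{p3} is NP-hard. The remaining steps, namely verifying that the disjoint-edge gadget satisfies Assumption~\ref{asm:perfect matching} and that input-reachability corresponds exactly to set coverage, are routine.
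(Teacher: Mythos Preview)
Your proposal is correct and follows essentially the same approach as the paper: both reduce Max-$k$-Union to Problem~\ref{p3} via the identical gadget of disjoint two-vertex edges (the paper uses pairs $(x_i,x_{i+n})$, you use $(x_j,x_j')$), with one candidate input per set connected to the vertices representing its elements. The paper gives only a sketch, deferring to the construction in the proof of Theorem~\ref{nphard}; you have filled in the details, in particular the verification via Theorem~\ref{dim} that the upper and lower bounds on $\textrm{m--dim}$ coincide on this gadget, which the paper leaves implicit.
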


A natural approximation solution to optimal design problems is through greedy algorithms~\cite{fujishige2005submodular}. Although greedy algorithms may not provide an optimal solution, under specific objective functions of the problem, a suboptimal solution with suboptimally guarantees can be provided. Specifically, a particular class of problem with such properties is called submodularity function problems, defined as follows. 
\begin{define}[Submodular function \cite{fujishige2005submodular}]
	Let $\Omega$ be a nonempty finite set. A set function $f\colon 2^{\Omega}\to \mathbb{R}$, where $2^{\Omega}$ denotes the power set of $\Omega$, is a \emph{submodular function} if for every $\mathcal{J}_1,\mathcal{J}_2\subseteq\Omega$ with $\mathcal{J}_1\subseteq \mathcal{J}_2$ and every $i\in\Omega\setminus\mathcal{J}_2$, we have $f(\mathcal{J}_2\cup\{i\})-f(\mathcal{J}_2)\le f(\mathcal{J}_1\cup\{i\})-f(\mathcal{J}_1)$.
\end{define}
The greedy algorithm \cite{fujishige2005submodular} achieves a $(1-1/e)$-factor approximation to the optimal solution provided that the objective function is submodular. In this paper, we show that the objective function in Problem~\ref{p3} is submodular; hence, the greedy algorithm provides a constant factor guarantee to the optimal solutions.
\begin{algorithm}[t]
	\caption{$(1-1/e)$ approximation solution to Problem~\ref{p3}}	\label{greedy}
	\small
	\begin{algorithmic}[1]
		\Require The pair $(\bar{A},\bar{B})$, $\bar{B}_{\mathcal{U}_{can}}\in\{0,\star\}^{n\times m'}$, and the budget $k$;
		\Ensure Suboptimal solution $\mathcal{J}$;
		\State Initialize $\mathcal{J}\gets\emptyset$, $\mathcal{L}\gets [m']$;\Comment{$\mathcal{L}$ is the set of indexes of new actuators in $\mathcal{U}_{can}$, the set of new actuators that can be added to the system.}
		\For {iteration $i\in [k]$}
		\For {each $j\in\mathcal{L}$}
		\State $d_j\gets \textrm{m-dim}(\bar{A},[\bar{B},\bar{B}_{can}(\mathcal{J}\cup\{j\})])$;
		\EndFor
		\State $\mathcal{I}\gets \{i\colon d_{i}=\max\{d_j\}_{j=1}^{|\mathcal{L}|}\}$;
		\State Pick a $j\in\mathcal{I}$;
		\State $\mathcal{J}\gets \mathcal{J}\cup\{j\}$;
		\State $\mathcal{L}\gets\mathcal{L}\setminus\{j\}$;
		\EndFor\\
		\Return $\mathcal{J}$
	\end{algorithmic}
\end{algorithm}


\begin{thm}\label{them_alg}
Algorithm~\ref{greedy} returns a $(1-1/e)$-approximation of the optimal solution to Problem~\ref{p3}.
\end{thm}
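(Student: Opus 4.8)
The plan is to recognize Algorithm~\ref{greedy} as the standard greedy procedure for cardinality-constrained submodular maximization and to reduce the theorem to verifying three properties of the set function $f\colon 2^{[m']}\to\mathbb{Z}$ given by $f(\mathcal{J})=\textrm{m-dim}(\bar{A},[\bar{B},\bar{B}_{\mathcal{U}_{can}}(\mathcal{J})])$: (i) $f$ is normalized, (ii) $f$ is monotone non-decreasing, and (iii) $f$ is submodular. Once these hold, the classical guarantee for greedy maximization of a monotone submodular function under a cardinality constraint \cite{fujishige2005submodular} yields that the returned set $\mathcal{J}$ satisfies $f(\mathcal{J})\ge(1-1/e)\,f(\mathcal{J}^*)$, where $\mathcal{J}^*$ is optimal. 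Since each iteration of Algorithm~\ref{greedy} selects an index $j\in\mathcal{L}$ maximizing the marginal value $\textrm{m-dim}(\bar{A},[\bar{B},\bar{B}_{can}(\mathcal{J}\cup\{j\})])$, it is exactly this greedy rule, so the theorem reduces to (i)--(iii).

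Monotonicity is the easier step. Appending a column to the input matrix only adds edges to $\mathcal{E_{U,X}}$ in $\mathcal{D}(\bar{A},\bar{B})$, so the input-reachable set $\mathcal{X}_r$ can only grow and the input-unreachable set $\mathcal{X}_u$ can only shrink. I would then argue using Theorem~\ref{con_SS} and Theorem~\ref{dim} that enlarging $\mathcal{X}_r$ never decreases the maximum dimension of the stabilizable subspace: a vertex that leaves $\mathcal{X}_u$ is, once reachable, generically part of the controllable subspace (Lemma~\ref{theo6}), which is itself stabilizable, while the stable modes still attainable on the remaining unreachable block are unaffected. Normalization (which can be arranged by replacing $f$ with $f-f(\emptyset)$) is then immediate from the definition, and does not affect the greedy guarantee.

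The crux is submodularity. My approach is to express $f$ through the reachability structure of $\mathcal{D}(\bar A)$: each candidate input $u_j$ reaches a fixed vertex set $R_j\subseteq\mathcal{X}$ (the states lying on some directed path from $u_j$), so the reachable set under a selection $\mathcal{J}$ is $\mathcal{X}_r(\mathcal{J})=\mathcal{X}_r(\emptyset)\cup\bigcup_{j\in\mathcal{J}}R_j$, and $\mathcal{J}\mapsto|\mathcal{X}_r(\mathcal{J})|$ is a coverage function, hence monotone submodular. By Theorem~\ref{dim} the dimension contributed by the reachable block is the term rank of $[\bar A_{11},\bar B_1]$, which tracks this coverage. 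The difficulty, and the main obstacle I expect, is the contribution of the uncontrollable (unreachable) block: when a state migrates from $\mathcal{X}_u$ to $\mathcal{X}_r$ it is removed from $\bar A_{22}$, which changes the maximum number of negative eigenvalues attainable there (the quantity bounded in part~2 of Theorem~\ref{dim} via the cycle and independence-set structure). I would therefore decompose $f(\mathcal{J})=g(\mathcal{J})+h(\mathcal{J})$, with $g$ the controllable-subspace dimension and $h$ the maximal count of stable modes on the unreachable block, show that $g$ is submodular via the coverage argument, and then verify that the coupled sum $g+h$ still satisfies the diminishing-returns inequality $f(\mathcal{J}_2\cup\{i\})-f(\mathcal{J}_2)\le f(\mathcal{J}_1\cup\{i\})-f(\mathcal{J}_1)$ for $\mathcal{J}_1\subseteq\mathcal{J}_2$ and $i\notin\mathcal{J}_2$. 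Establishing this inequality for the coupled term $h$---showing that a state newly covered by a smaller selection yields at least as large a net gain as when covered by a larger one---is the delicate part, and it is cleanest under Assumption~\ref{asm:perfect matching}, where the unreachable block's contribution collapses and $f$ reduces to the pure coverage function $|\mathcal{X}_r(\mathcal{J})|$, whose submodularity is classical.
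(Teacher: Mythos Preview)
Your high-level plan---reduce to the Nemhauser--Wolsey--Fisher guarantee by showing that $f(\mathcal{J})=\textrm{m-dim}(\bar A,[\bar B,\bar B_{\mathcal{U}_{can}}(\mathcal{J})])$ is monotone and submodular---is exactly the route the paper takes, and your monotonicity argument is fine. The gap is that you do not actually establish submodularity except under Assumption~\ref{asm:perfect matching}; you explicitly flag the coupled term as ``the delicate part'' and retreat to the special case. That leaves the theorem unproved in the general setting in which it is stated.

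The decomposition $f=g+h$ you propose, with $h$ the maximal number of negative eigenvalues attainable on the unreachable block, is awkward because $h$ is \emph{decreasing} in $\mathcal{J}$ (the block shrinks), so you are trying to show diminishing returns for a sum of an increasing submodular function and a decreasing function, which need not be submodular in general. The paper avoids this by absorbing the loss in $h$ into the coverage gain: for each input-unreachable SCC $\mathcal{D}_i$ of $\mathcal{D}(\bar A,\bar B)$ it builds a set $\mathcal{R}_i$ of cardinality $\textrm{t--rank}(\bar A_{\mathcal{D}_i})-\textrm{m-dim}(\bar A_{\mathcal{D}_i},0)$, i.e.\ the \emph{net} stabilizability gain when that SCC is made reachable, and then lets $\mathcal{S}_j=\bigcup_{i\in\mathcal{I}_j}\mathcal{R}_i$ over the SCCs $\mathcal{I}_j$ reachable from candidate input $u_j$. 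This yields
\[
f(\mathcal{J})=\textrm{m-dim}(\bar A,\bar B)+\Bigl|\bigcup_{j\in\mathcal{J}}\mathcal{S}_j\Bigr|+q(\mathcal{J}),
\]
where $q(\mathcal{J})$ counts the state vertices that are right-unmatched in $\mathcal{B}(\bar A,\bar B)$ but become right-matched in $\mathcal{B}(\bar A,[\bar B,\bar B_{\mathcal{U}_{can}}(\mathcal{J})])$. The first nonconstant term is a coverage function, hence submodular; for the second, one uses that a single new input column can increase a maximum matching by at most one, so if the marginal of $q$ at $\mathcal{J}_1\subseteq\mathcal{J}_2$ is $0$ it is also $0$ at $\mathcal{J}_2$, and if it is $1$ at $\mathcal{J}_1$ it is at most $1$ at $\mathcal{J}_2$. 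This is the missing idea that closes your gap without invoking Assumption~\ref{asm:perfect matching}.
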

\begin{remark}\label{rem}
In \cite{chlamtac2018densest}, the authors argue that insofar there is no constant factor approximation to the Min-k-Union problem. Thus, together with Theorem~\ref{thm:partial reduction}, we cannot use the greedy algorithm to approximate Problem~\ref{p2} with guarantee. 
\end{remark}
\section{Illustrative Examples}\label{sec:Examples}
In this section, we present examples to illustrate our results on structural stabilizability and approximation solution to Problem~\ref{p3}.
\subsection{Maximum Dimension of Stabilizable Subspace}
We consider a structural pair $(\bar{A},\bar{B})$, where $\bar{A}\in\{0,\star\}^{11\times 11}$ is symmetrically structured and $\bar{B}\in\{0,\star\}^{11\times 1}$ is structured. 
\begin{equation}\footnotesize\label{eq:ex}
	\bar{A}=\left[\begin{smallmatrix}
	     0&a_{12}&     0&a_{14}& a_{15}&    0&     0&0&0&0&0\\
	a_{12}&     0&     0&     0&     0&     0&     0&0&0&0&0\\
	     0&	    0&     0&     0&     0&a_{36}&a_{37}&0&0&0&0\\
	a_{14}&     0&     0&     0&     0&     0&     0&0&0&0&0\\
	a_{15}&     0&     0&     0&0&     0&     0&0&0&0&0\\
		 0&     0&a_{36}&     0&     0&a_{66}&     0&0&0&0&0\\
		 0&     0&a_{37}&     0&     0&     0&     0&0&0&0&0\\
		 0&     0&     0&     0&     0&     0&     0&0&   0& a_{810}&0\\
		 0&0&0&0&0&0&0&0&a_{99}&a_{910}&0\\
		 0&0&0&0&0&0&0&a_{810}&a_{910}&0&a_{1011}\\
		 0&0&0&0&0&0&0&0&0&a_{1011}&0\\
	\end{smallmatrix}\right], \bar{B}=\left[\begin{smallmatrix}
	b_{11}\\0\\0\\b_{41}\\0\\0\\0\\0\\0\\0\\0
	\end{smallmatrix}\right].
\end{equation}
We depict the digraph representation of the structural pair $(\bar{A},\bar{B}),$ denoted by $\mathcal{D}(\bar{A},\bar{B}),$ in Figure~\ref{fig2}. Since $x_3$ and $x_7$ are unreachable vertices and they do not have self-loops, the pair $(\bar{A},\bar{B})$ is not structurally stabilizable due to Theorem~\ref{con_SS}. Furthermore, the total number of right-matched (with respect to any maximum matching in the associated bipartite graph $\mathcal{B}(\bar{A},\bar{B})$) reachable vertices is $3$. On one hand, a maximal independence set in the input-unreachable part has a cardinality of $3$. By Theorem~\ref{dim}, $\textrm{m-dim}(\bar{A},\bar{B})\le 3+7-3=7$. On the other hand, there are two vertex-disjoint cycles with length 2 and two cycles with length 1 in the input-unreachable part. By Theorem~\ref{dim}, $\textrm{m-dim}(\bar{A},\bar{B})\ge 3+2+2=7$. Therefore, by invoking Theorem~\ref{dim}, we conclude that the maximum stabilizable subspace is $7$. 
\begin{figure}[t]
\centering
\includegraphics[width=0.36\textwidth]{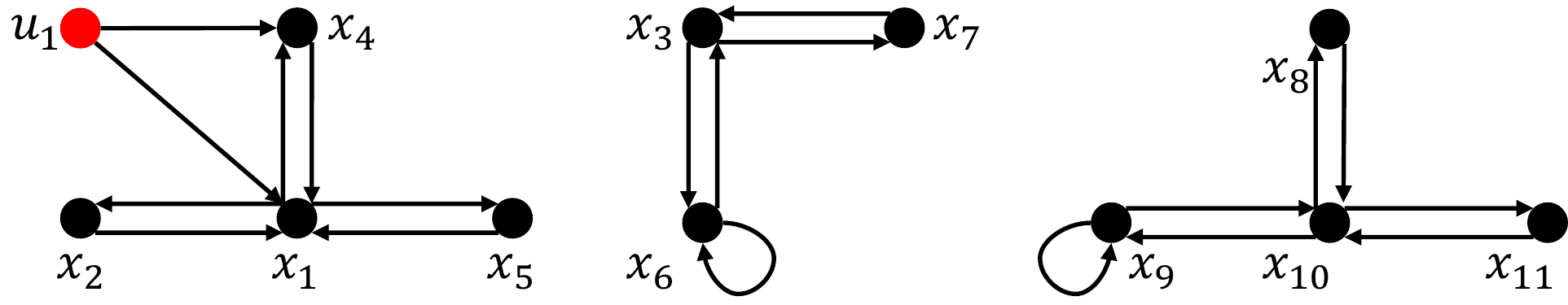}
\caption{In this figure, we depict the structure of $\mathcal{D}(\bar{A},\bar{B}).$ The red vertex labeled by $u_1$ and black vertices labeled by $x_1, \ldots, x_{11}$ are the input vertex and state vertices, respectively. The black arrows represent the edges from input vertex to state vertices, as well as edges between state vertices.}\label{fig2}
\end{figure}
\subsection{Optimal Recovery Problem}
Now, we present an example to illustrate the use of Algorithm~\ref{greedy}. Consider again the structural pair $(\bar{A},\bar{B})$ specified in \eqref{eq:ex}. As noted in the last subsection, the $(\bar{A},\bar{B})$ is not structurally stabilizable. We let $\mathcal{U}_{can}=\{u_i\}_{i=2}^7$ be the set of candidate actuators that can be added into the system and associate it with the structured matrix $\bar{B}_{\mathcal{U}_{can}}\in\{0,\star\}^{11\times 6}$, of which nonzero entries are captured by the red edges of the digraph $\mathcal{D}(\bar{A},[\bar{B},\bar{B}_{\mathcal{U}_{can}}])$ depicted in Figure~\ref{fig3}.

We have obtained in the last subsection that $\textrm{m-dim}(\bar{A},\bar{B})$ is at least $5$. Suppose we have a budget $k=3$, then Problem~\ref{p3} consists in adding 3 actuators from $\mathcal{U}_{can}$ into the system such that the maximum stabilizable subspace is maximized. In the first iteration of Algorithm~\ref{greedy}, $u_4$ is selected because $\textrm{m-dim}(\bar{A},[\bar{B},\bar{B}_{\mathcal{U}_{can}}(\{4\})])-\textrm{m-dim}(\bar{A},\bar{B})=2\ge \textrm{m-dim}(\bar{A},[\bar{B},\bar{B}_{\mathcal{U}_{can}}(\{i\})])-\textrm{m-dim}(\bar{A},\bar{B}), \forall u_i\in\mathcal{U}_{can}.$ Similarly, in the second iteration, $u_3$ is selected by Algorithm~\ref{greedy}. This results that $\textrm{m-dim}(\bar{A},[\bar{B},\bar{B}_{\mathcal{U}_{can}}(\{3,4\})])=10$. Finally, $u_{7}$ is selected and $\textrm{m-dim}(\bar{A},[\bar{B},\bar{B}_{\mathcal{U}_{can}}(\{3,4,7\})])=11$. Since the maximum possible stabilizable subspace is always less than or equal to the total number of states, in this example, Algorithm~\ref{greedy} returns an optimal solution to Problem~\ref{p3}.

\begin{figure}[t]
	\centering
	\includegraphics[width=0.36\textwidth]{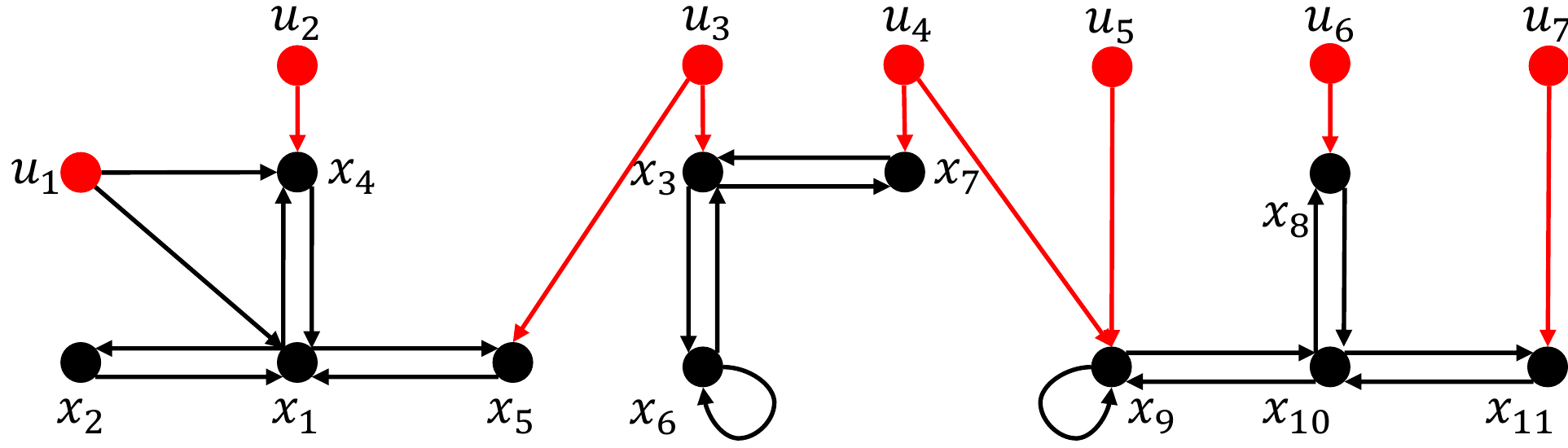}
\caption{In this figure, we depict the digraph $\mathcal{D}(\bar{A},[\bar{B},\bar{B}_{\mathcal{U}_{can}}]).$ We use red and black vertices to represent input vertices and state vertices, respectively. The black and red arrows represent are the edges in $\mathcal{E_{X,X}}\cup\mathcal{E}_{\{u_1\},\mathcal{X}}$ and edges in $\mathcal{E}_{\mathcal{U}_{can},\mathcal{X}}$, respectively.}\label{fig3}
\end{figure}
\section{Conclusion}\label{sec:conclusion}
In this paper, we studied the structural stabilizability problem of undirected networked dynamical systems. We proposed a graph-theoretic method to derive the maximum dimension of stabilizable subspace of an undirected network. In addition, we formulated the optimal actuator-disabling attack problem and optimal recovery problem. We proved that these two problems are NP-hard. Finally, we developed a $(1-1/e)$ approximation algorithm for the optimal recovery problem. 

In the future, we will focus on developing approximation algorithms for the optimal actuator-disabling attack problem. Furthermore, it would also be of interest to relax the assumption on symmetricity, and extend the results in this paper to directed networked systems.
\section*{Appendix}

\begin{proof}[Proof of Lemma~\ref{lem_2}]
First, we notice that sufficiency follows from the fact that structural controllability ensures that almost surely there exists numerical realization ensuring controllability, which implies that any desired state can be attained by a finite sequence of inputs. Therefore, if there was not one such sequence, then the uncontrollable subspace is nonempty, and the only way to ensure that we can take the state to the origin is when the subspace is stable. a control input driving the states to the origin in finite time. Necessity follows by contrapositive argument. 
Suppose $(\bar{A},\bar{B})$ is irreducible but not structurally controllable, then by Theorem 1 in \cite{li2018tc}, there exists a set $\mathcal{S\subseteq X}$ such that $|\mathcal{N(S)}|<|\mathcal{S}|$, which implies that $\textrm{g-rank}([\bar{A},\bar{B}])<n$. 
For $\forall [\mathbf{p}_{\tilde{A}},\mathbf{p}_{\tilde{B}}]\in\mathbb{R}^{n_{\bar{A}}+ n_{\bar{B}}}$, $\exists v\in\mathbb{C}^n$, such that $v^T[\tilde{A},\tilde{B}]=0$, i.e., $v^T\tilde{A}=v^T0$. Consequently, there exists a zero eigenvalue which is not controllable, hence not stabilizable.
\end{proof}

\begin{proof}[Proof of Lemma~\ref{lem_1}]
%
\emph{(If)} Let us construct a numerical realization $\tilde{A}_{22}$ by assigning zero value to off-diagonal $\star$-entries of $\bar{A}_{22}$, and negative values to $\star$-entries on the diagonal. In this case, matrix $\tilde{A}_{22}$ is negative definite diagonal matrix.
	
\emph{(Only if)} We approach the proof by contrapositive. Let $m$ be the dimension of $\bar{
A}_{22}, $ and $\{v_i\}_{i=1}^m$ be the standard basis in $\mathbb{R}^m.$ Suppose there exists a fixed zero $[\bar{A}_{22}]_{ii}=0,$ then $v_i^T\tilde{A}_{22}v_i=[\tilde{A}_{22}]_{ii}=[\bar{A}_{22}]_{ii}=0$, for all numerical realizations of $\bar{A}_{22};$ hence, $\tilde{A}_{22}$ is not negative definite. 
\end{proof}

\begin{proof}[Proof of Theorem~\ref{con_SS}]
\emph{(If)} Without loss of generality, suppose $(\bar{A},\bar{B})$ can be transformed to the form of \eqref{eq}. Suppose for $\forall \mathcal{S}\subseteq \mathcal{X}_r$, $\left|\mathcal{N(S)}\right|\ge \left|\mathcal{S}\right|$, then the input reachable subsystem $(\bar{A}_{11},\bar{B}_1)$ is structurally controllable. If for $\forall x_i \in\mathcal{X}_u$, $x_i$ has self-loop in $\mathcal{D}(\bar{A},\bar{B})$, then $[\bar{A}]_{ii}$ is a $\star$-entry. Let us assign negative numerical weights to all the $\star$-entries of $\bar{A}$ that correspond to the self-loop of all $x_i\in\mathcal{X}_u$. Then, the input-unreachable part of the system, $\tilde{A}_{22}$, is a negative definite diagonal matrix. Thus, we have shown that there exists a numerical realization $(\tilde{A},\tilde{B})$, such that the uncontrollable part is asymptotically stable. Hence, the system is structurally stabilizable.
	
	\emph{(Only if)} The necessity can be proved by contrapositive. Suppose there exists a state vertex $x_i\in\mathcal{X}_u$ that $[\bar{A}]_{ii}=0$, then, by Lemma~\ref{lem_1} any numerical realization $(\tilde{A},\tilde{B})$ has an uncontrollable non-negative eigenvalue. Furthermore, assume there exists $\mathcal{S}\subseteq\mathcal{X}$ such that $\left|\mathcal{N(S)}\right|<\left|\mathcal{S}\right|$, then by Lemma~\ref{lem_2}, $(\bar{A},\bar{B})$ is not structurally stabilizable.
\end{proof}


\begin{proof}[Proof of Lemma~\ref{theo6}]
Suppose $\textrm{t--rank}([\bar{A},\bar{B}])=k$, then there exists a set $\mathcal{T}\in[n]$, such that for $\forall \mathcal{S}\subseteq \mathcal{X_T}=\{x_i\in\mathcal{X}\colon i\in\mathcal{T}\}$, $\left|\mathcal{N(S)}\right|\ge\left|\mathcal{S}\right|$. By Theorem 2 in \cite{li2018tc}, $(\bar{A},\bar{B})$ is structurally target controllable\footnote{To prove Lemma~\ref{theo6}, here we use the results on structural target controllability. Due to page limitations, please refer to \cite{li2018tc} for more details.} with respect to $\mathcal{T}$, which implies that there exists a numerical realization $(\tilde{A},\tilde{B})$ with $[\mathbf{p}_{\tilde{A}},\mathbf{p}_{\tilde{B}}]\in V^c\cap W^c$, where $V$ and $W$ are proper varieties in $\mathbb{R}^{n_{\bar{A}}+n_{\bar{B}}}$ defined in the proof of Theorem 2 in \cite{li2018tc}, such that the dimension of the controllable subspace is $k$, i.e., almost surely the dimension of controllable subspace of a numerical realization $(\tilde{A},\tilde{B})$ is $k$. We have the generic dimension of controllable subspace of $(\bar{A},\bar{B})$, $d_c=\textrm{t--rank}([\bar{A},\bar{B}])$.
%
\end{proof}

\begin{proof}[Proof of Theorem~\ref{dim}]
	Without loss of generality, there exists only two cases: either $(\bar{A},\bar{B})$ is irreducible or not. In the first case, by Lemma~\ref{lem:proper}, Lemma~\ref{lem_2} and Lemma~\ref{theo6}, the generic dimension of controllable subspace of $(\bar{A},\bar{B})$ is $\textrm{t--rank}([\bar{A},\bar{B}])$, and if $\textrm{t--rank}([\bar{A},\bar{B}])<n$, then for any numerical realization $(\tilde{A},\tilde{B})$, there are $(n-k)$ zero uncontrollable eigenvalues. Therefore, the maximum dimension of stabilizable subspace of $(\bar{A},\bar{B})$ equals to $\textrm{t--rank}([\bar{A},\bar{B}])$; In the other case, permute $(\bar{A},\bar{B})$ to the form of \eqref{eq} and let $k$ be the number of $\star$-entries in the diagonal of $\bar{A}_{22}$. Therefore, we have $\textrm{m-dim}(\bar{A},\bar{B})=\textrm{t-rank}(\bar{A}_{11},\bar{B}_1)+\textrm{m-dim}(\bar{A}_{22})$. 
	
	In what follows, we first prove that $\textrm{m-dim}(\bar{A}_{22})\le (n-p)-|\mathcal{S}|$, where $\mathcal{S}$ is a maximal independence set in $\mathcal{D}(\bar{A}_{22})$ and $\bar{A}_{22}\in\{0,\star\}^{(n-p)\times(n-p)}$. We then let $\{\mathcal{C}_i\}_{i=1}^{k}$ be a set of disjoint cycles in $\mathcal{D}(\bar{A}_{22})$ and we prove $\textrm{m-dim}(\bar{A}_{22})\ge \dfrac{1}{2}\sum_{i\in\mathcal{S}_e}|\mathcal{C}_i|+\dfrac{1}{2}\sum_{i\in\mathcal{S}_o}\left(|\mathcal{C}_i|+1\right)$, where $\mathcal{S}_e$ is the set of indexes of cycles with even length in $\{\mathcal{C}_i\}_{i=1}^k$and $\mathcal{S}_o$ is the set of indexes of cycles with odd length in $\{\mathcal{C}_i\}_{i=1}^k$.
	
	On one hand, suppose there exists a maximal independence set $\mathcal{S}$ in $\mathcal{D}(\bar{A}_{22})$, then we construct a new matrix $\mathcal{C}\in\{0,\star\}^{|\mathcal{S}|\times |\mathcal{S}|}$, which is composed by the rows and columns of $\bar{A}_{22}$ indexed by $\mathcal{S}$. We claimed that all the entries of $C$ are zeros, otherwise $\mathcal{S}$ is not an independence set. Thus, all the eigenvalues of $C$ are zeros. By Eigenvalue Interlacing Theorem~\cite[Page 246]{horn1990matrix}, we conclude that for any numerical realization $\tilde{A}_{22}$ there are at least $|\mathcal{S}|$ non-negative eigenvalues, i.e., there exists at most $(n-p)-|\mathcal{S}|$ strictly negative eigenvalues for any numerical realization $\tilde{A}_{22}$. Hence, $\textrm{m-dim}(\bar{A}_{22})\le (n-p)+|\mathcal{S}|$.
	
	On the other hand, suppose there exists a set of vertex-disjoint cycles $\{\mathcal{C}_i\}_{i=1}^{k}$ in $\mathcal{D}(\bar{A}_{22})$. Let us denote by $\mathcal{C}_i$ the $i$-th cycle in $\{\mathcal{C}_1,\dots,\mathcal{C}_k\}$. Moreover, without loss of generality, we could permute $\{\mathcal{C}_i\}_{i=1}^k$ such that the first $|\mathcal{S}_e|$ are the cycles with even lengths and the rest are cycles with odd lengths. 
	{Note that by definition, there is a one-to-one correspondence between the edge in $\mathcal{D}(\bar{A}_{22})$ and the $\star$-entry in $\bar{A}_{22}$. From this observation, we denote by $\bar{A}_i\in\{0,\star\}^{|\mathcal{C}_i|\times |\mathcal{C}_i|}$ the square submatrix formed by collecting rows and columns in $\bar{A}_{22}$ corresponding to the indexes of vertices in the cycle $\mathcal{C}_i$. }	 	
	We let all the $\star$-entries of $\bar{A}_{22}$ be zero, except for $\star$-entries corresponding to edges in $\{\mathcal{C}_i\}_{i=1}^k$. Hence, there exists a permutation matrix $P$ and numerical realization $\tilde{A}_{22}$, such that $P\tilde{A}_{22}P^{-1}$ is a block diagonal matrix,
	\begin{equation}\label{decomposition}\small
	P\tilde{A}_{22}P^{-1}=\begin{bmatrix}
	\tilde{A}_{1}&\mathbf{0}&\cdots&\mathbf{0}\\
	\mathbf{0}&\tilde{A}_2&\cdots&\mathbf{0}\\
	\vdots & \vdots & \ddots&\vdots\\
	\mathbf{0} & \mathbf{0} &\cdots&\tilde{A}_k
	\end{bmatrix}.
	\end{equation}
	
	Following the same technique in the proof of Lemma 2 in \cite{li2018tc}, for the submatrix $\bar{A}_i$ corresponding to the even-length cycle, i.e., $|\mathcal{C}_i|=2\ell$, we can construct a numerical realization $\tilde{A}_{i}$ with $\ell$ strictly negative eigenvalues. For the submatrix $\bar{A}_{i}$ corresponding to the odd-length cycle, i.e., $|\mathcal{C}_i|=2\ell-1$, we can construct a numerical realization with $\ell$ strictly negative eigenvalues. Hence, we can construct a numerical realization $\tilde{A}_{22}$ with $\dfrac{1}{2}\sum_{i\in\mathcal{S}_e}|\mathcal{C}_i|+\dfrac{1}{2}\sum_{i\in\mathcal{S}_o}\left(|\mathcal{C}_i|+1\right)$ strictly negative eigenvalues, i.e., $\textrm{m-dim}(\bar{A}_{22})\ge \dfrac{1}{2}\sum_{i\in\mathcal{S}_e}|\mathcal{C}_i|+\dfrac{1}{2}\sum_{i\in\mathcal{S}_o}\left(|\mathcal{C}_i|+1\right)$.
\end{proof}

	\begin{figure}[t!]
	\centering
	\includegraphics[width=0.25\textwidth]{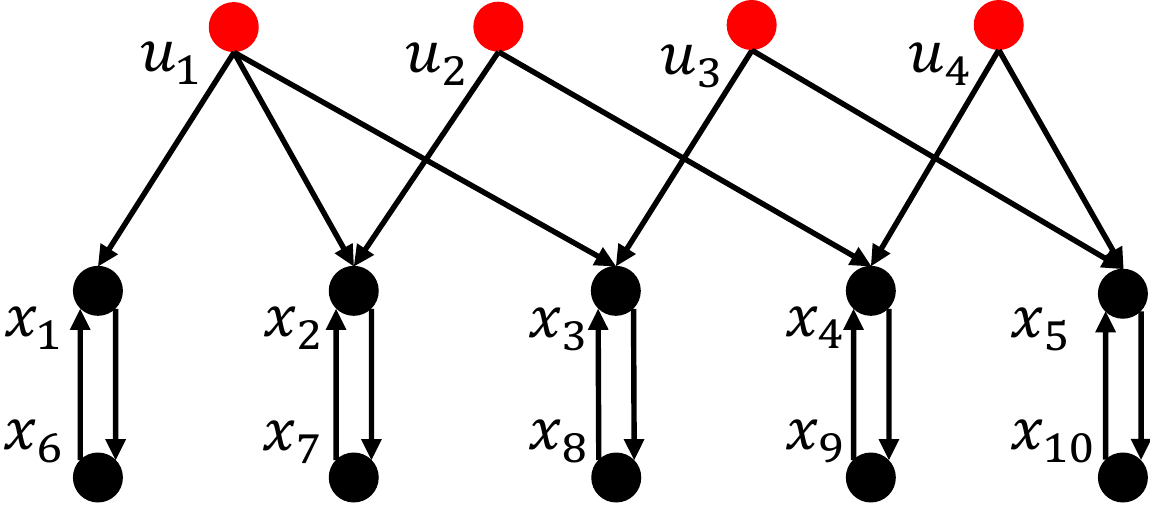}
	\caption{Example of the construction of $\mathcal{D}(\mathcal{X\cup U},\mathcal{E_{X,X}}\cup \mathcal{E_{U,X}})$ in the proof of Theorem~\ref{nphard}. Suppose we have a finite universe set $\mathcal{U_S}=\bigcup_{\ell=1}^4 \mathcal{S}_{\ell}$, where $\mathcal{U_S}=\{1,2,3,4,5\},\mathcal{S}_1=\{1,2,3\},\mathcal{S}_2=\{2,4\},\mathcal{S}_3=\{3,5\},\mathcal{S}_4=\{4,5\}$. From the given set $\mathcal{U_S}=\bigcup_{\ell=1}^4 \mathcal{S}_{\ell}$. We construct the state vertex set $\mathcal{X}=\{x_i\}_{i=1}^{10}$, and the input vertex set $\mathcal{U}=\{u_i\}_{i=1}^{4}$. The black and red vertices in Figure~\ref{fig1} are the state and input vertices in $\mathcal{D}(\mathcal{X\cup U},\mathcal{E_{X,X}}\cup \mathcal{E_{U,X}})$, respectively.}
	\label{fig1}
\end{figure}
\begin{proof}[Proof of Theorem~\ref{nphard}]
	We prove the NP-hardness of Problem~\ref{p2} by (polynomially) reducing Min-k-Union problem to instances of Problem~\ref{p2}.
	
	Suppose that we have a universe set $\mathcal{U_S}=\{\mathcal{S}_{\ell}\}_{\ell=1}^p$, and an integer $k\in\mathbb{Z}^+$, for which we need to select $k$ subsets in $\{\mathcal{S}_{\ell}\}_{\ell=1}^p$ such that $|\bigcup_{i=1}^k\mathcal{S}_{\ell_i}|$ is minimized. Let $n=|\mathcal{U_S}|$ and define the state vertex set as $\mathcal{X}=\{x_i\}_{i=1}^{2n}$, and input vertex set as $\mathcal{U}=\{u_i\}_{i=1}^{p}$. Next, we can construct a set of directed edges between state vertices, $\mathcal{E_{X,X}}=\{(x_i,x_{i+n}),(x_{i+n},x_i)\}_{i=1}^n$, and a set of directed edges between input and state vertices, $\mathcal{E_{U,X}}=\{(u_i,x_j)\colon i\in[p],j\in\mathcal{S}_i\}$ -- see Figure~\ref{fig1} as an example for such a construction. In the constructed graph $\mathcal{D}(\mathcal{X\cup U, E_{X,X}}\cup \mathcal{E_{U,X}})$, we have $|\mathcal{N(S)}|\ge |\mathcal{S}|, \forall \mathcal{S\subseteq X}$, and all $x_i\in\mathcal{X}$ are reachable. 
	
	{\color{black}From the graph $\mathcal{D(X\cup U,E_{X,X}\cup E_{U,X})}$, we construct the symmetrically structured matrix $\bar{A}\in\{0,\star\}^{2n\times 2n}$ such that $[\bar{A}]_{ij}=\star$ if $\{x_j,x_i\}\in\mathcal{E_{X,X}}$ and $[\bar{A}]_{ij}=0$ otherwise. We also construct $\bar{B}\in\{0,\star\}^{2n\times p}$ such that $[\bar{B}]_{ij}=\star$ if $\{u_j,x_i\}\in\mathcal{E_{U,X}}$ and $[\bar{B}]_{ij}=0$ otherwise. We can verify that the maximum stabilizable subspace for the constructed symmetrically structured matrix $\bar{A}\in\{0,\star\}^{2n\times 2n}$ is $n$ and $\textrm{m-dim}(\bar{A},\bar{B}(\mathcal{J}))=n+\dfrac{1}{2}|\cup_{j\in\mathcal{J}}\mathcal{S}_j|$.} 
	Let the attack budget be $c=p-k$. In our constructed instance of Problem~\ref{p2}, we aim to remove $c$ actuators from $\{u_i\}_{i=1}^p$ such that the maximum dimension of the stabilizable subspace is minimized. Subsequently, we claim that an optimal solution of the constructed instance of Problem~\ref{p2} enables us to retrieve an optimal solution to the Min-k-Union problem.
	
	Suppose we have a feasible solution $\mathcal{U}_r=\{u_{\ell_i}\}_{i=1}^{p-k}$. Then if we consider $\mathcal{L}=[p]\setminus\{\ell_i\}_{i=1}^{p-k}$, we have that $\mathcal{L}$ is a feasible solution of Min-k-Union problem. Moreover, suppose $\mathcal{U}^*_r=\{u_{\ell_i}\}_{i=1}^{p-k}$ is a minimum solution to Problem~\ref{p2}, but $\mathcal{L}=[p]\setminus\{\ell_i\}_{i=1}^{p-k}$ is not an optimal solution of Min-k-Union problem, then $\mathcal{L}'=\{\eta_i\}_{i=1}^{k}$ would be a solution to Min-k-Union problem such that $|\bigcup_{i=1}^k \mathcal{S}_{\eta_i}|<|\bigcup_{i\in\mathcal{L}}\mathcal{S}_{i}|$. Next, let $\mathcal{U}'_r=\{u_i\in\mathcal{U}\colon i\in[p]\setminus\mathcal{L}'\}$ and notice that the maximum stabilizable subspace by removing $\mathcal{U}'_r$ is smaller than the maximum stabilizable subspace when removing $\mathcal{U}^*_r$, which contradicts $\mathcal{U}^*_r$ is an optimal solution.
%
	%
	%
\end{proof}
\begin{proof}[Proof of Theorem~\ref{thm:partial reduction}]
{\color{black}	Consider an instance of Problem~\ref{p2} under Assumption~\ref{asm:perfect matching}. We associate the structural pair $(\bar{A},\bar{B})$, where $\bar{A}\in\{0,\star\}^{n\times n}$ and $\bar{B}\in\{0,\star\}^{n\times m}$, with a digraph $\mathcal{D}(\bar{A},\bar{B})$. Denote by $\{\mathcal{D}_{i}\}_{i=1}^p$ the set of vertices in the $i$th SCC in $\mathcal{D}(\bar{A},0)$. 
	
	Firstly, without loss of generality, we could assume that $\mathcal{D}_i=\{x_{d_{i-1}+1},x_{d_{i-1}+2},\cdots, x_{d_{i-1}+|\mathcal{D}_i|}\}$, with $d_0=0$ and $d_i=d_{i-1}+|\mathcal{D}_i|$. Secondly, for the $i$th SCC, we define the set $\mathcal{R}_i=\{r_{i-1}+1,r_{i-1}+2,\dots,r_{i-1}+(|\mathcal{D}_i|-\textrm{m-dim}(\bar{A}_{\mathcal{D}_i},0))\}$, with $r_0=0$ and $r_{i}=r_{i-1}+|\mathcal{D}_i|-\textrm{m-dim}(\bar{A}_{\mathcal{D}_i},0)$, where $\bar{A}_{\mathcal{D}_i}\in\{0,\star\}^{|\mathcal{D}_i|\times |\mathcal{D}_i|}$ is the symmetrically structured matrix corresponding to the $i$th SCC. Finally, for the $j$th control input, we construct the set $\mathcal{S}_j=\{\bigcup_{i\in \mathcal{I}}\mathcal{R}_i|\  \mathcal{I}\textrm{ is the set of SCCs reachable from }u_j\}$.}
	
	By the above construction and Assumption~\ref{asm:perfect matching}, we have
	\begin{equation}
		\textrm{m-dim}(\bar{A},\bar{B}(\mathcal{J}))=\textrm{m-dim}(\bar{A},0)+|\bigcup_{j\in\mathcal{J}}\mathcal{S}_j|.
	\end{equation}

We let $\mathcal{U_S}=\bigcup_{i=1}^m\mathcal{S}_i$. Suppose the budget in Problem~\ref{p2} is $k$, then Problem~\ref{p2} is to find $(m-k)$ sets $\mathcal{S}_{\ell_1},\cdots,\mathcal{S}_{\ell_{m-k}}$ among $\{\mathcal{S}_i\}_{i=1}^m$ such that $|\bigcup_{i=1}^{m-k}\mathcal{S}_{\ell_i}|$ 
is minimized.	By Definition~\ref{df:mku}, we see that in this case Problem~\ref{p2} is equivalent to the Min-k-Union problem, in which we are given sets $\{\mathcal{S}_i\}_{i=1}^{m}$ and we aim to find $(m-k)$ sets $\{\mathcal{S}_{\ell_{i}}\}_{i=1}^{m-k}$, $\{\ell_i\}_{i=1}^{m-k}\subseteq\{1,2,\cdots,m\}$, such that $|\bigcup_{i=1}^{m-k}\mathcal{S}_{\ell_i}|$ is minimized. If there exists a $\rho(m)$-approximation algorithm for the Min-k-Union problem, i.e., $|\bigcup_{j\in\mathcal{J}}\mathcal{S}_j|\le \rho(m)|\bigcup_{j\in\mathcal{J}^*}\mathcal{S}_j|$, then,  
	\begin{equation}\footnotesize\label{eq:m}
	\begin{aligned}
	\textrm{m-dim}&(\bar{A},\bar{B}(\mathcal{J}))=\textrm{m-dim}(\bar{A},0)+|\bigcup_{j\in\mathcal{J}}\mathcal{S}_j|\\
	&\le\textrm{m-dim}(\bar{A},0)+ \rho(m)\cdot( |\bigcup_{j\in\mathcal{J}^*}\mathcal{S}_j|)\\
	&\le\rho(m)\cdot\textrm{m-dim}(\bar{A},\bar{B}(\mathcal{J}^*)),
	\end{aligned}
	\end{equation}
	where $\mathcal{J}^*$ is an optimal solution to the Min-k-Union problem. From the above reasoning, we have that $\bar{B}(\mathcal{J}^*)$ is also an optimal solution to Problem~\ref{p3} and $\textrm{m-dim}(\bar{A},\bar{B}(\mathcal{J}))\le\rho(m)\cdot\textrm{m-dim}(\bar{A},\bar{B}(\mathcal{J}^*))$.
\end{proof}
\begin{proof}[Sketch of Proof of Theorem~\ref{nphard2}]
	We can prove the NP-hardness by reducing a general instance of the Max-k-Union problem to an instance of Problem~\ref{p3}. Suppose we have a ground set $\mathcal{U_S}=\{\mathcal{S}_{\ell}\}_{\ell=1}^p$, and an integer $k\in\mathbb{N}$. The constrained maximum set coverage problem is to select $k$ subsets in $\mathcal{U_S}$ such that $|\bigcup_{i=1}^k\mathcal{S}_{\ell_i}|$ is maximized. Following a similar construction and reasoning taken in the proof of Theorem~\ref{nphard}, we can prove that the Max-k-Union problem can be reduced to Problem~\ref{p3} in polynomial time.
\end{proof}

\begin{proof}[Proof of Theorem~\ref{them_alg}]
Consider a structural pair $(\bar{A},\bar{B})$, where $\bar{A}\in\{0,\star\}^{n\times n}$ is symmetrically structured and $\bar{B}\in\{0,\star\}^{n\times m}$ is structured. We let $\mathcal{U}$ denote the input vertices corresponding columns of $\bar{B}$, and let $\mathcal{U}_{can}$, where $|\mathcal{U}_{can}|=m'$, be the set of new actuators that can be added to the system. We associate with the set $\mathcal{U}_{can}$ the structured matrix $\bar{B}_{\mathcal{U}_{can}}\in\{0,\star\}^{n\times m'}$. Define a function $f\colon \mathcal{J}\subseteq[m']\to \textrm{m--dim}(\bar{A},[\bar{B},\bar{B}_{\mathcal{U}_{can}}(\mathcal{J})])$. We first prove that $f(\mathcal{J})$ is a submodular function, and then we show that Algorithm~\ref{greedy} returns a $(1-1/e)$ approximation solution.
	
{\color{black}	Before we proceed, we construct a few sets. We denote by $\{\mathcal{D}_i\}_{i=1}^p$ the set of vertices in the $i$th input-unreachable SCC in $\mathcal{D}(\bar{A},\bar{B})$. Without loss of generality, we could assume that $\mathcal{D}_i=\{x_{d_{i-1}+1},x_{d_{i-1}+2},\cdots, x_{d_{i-1}+|\mathcal{D}_i|}\}$, with $d_0=0$ and $d_i=d_{i-1}+|\mathcal{D}_i|$. Then, for the $i$th unreachable SCC, we define the set $\mathcal{R}_i=\{r_{i-1}+1,r_{i-1}+2,\dots,r_{i-1}+(\textrm{t-rank}(\bar{A}_{\mathcal{D}_i})-\textrm{m-dim}(\bar{A}_{\mathcal{D}_i},0))\}$, with $r_0=0$ and $r_{i}=r_{i-1}+\textrm{t-rank}(\bar{A}_{\mathcal{D}_i})-\textrm{m-dim}(\bar{A}_{\mathcal{D}_i},0)$, where $\bar{A}_{\mathcal{D}_i}\in\{0,\star\}^{|\mathcal{D}_i|\times |\ \mathcal{D}_i|}$ is the symmetrically structured matrix corresponding to the $i$th unreachable SCC. Finally, for the $j$th control input $u_j$ in $\mathcal{U}_{can}$, we construct the set $\mathcal{S}_j=\{\bigcup_{i\in \mathcal{I}}\mathcal{R}_i|\  \mathcal{I}\textrm{ is the set of unreachable SCCs in } \mathcal{D}(\bar{A},\bar{B})$ $\textrm{but reachable from }u_j\}$.

Let $q({\mathcal{J}})$ be the total number of state vertices which are right-unmatched in $\mathcal{B}(\bar{A},\bar{B})$ but right-matched in $\mathcal{B}(\bar{A},[\bar{B},\bar{B}_{\mathcal{U}_{can}}(\mathcal{J})])$. By definition, we have
\begin{equation}
	f(\mathcal{J})=\textrm{m-dim}(\bar{A},\bar{B})+|\bigcup_{j\in\mathcal{J}}\mathcal{S}_j|+q(\mathcal{J}).
\end{equation}
which implies $f(\mathcal{J})$ is a monotonically increasing function of $\mathcal{J}\subseteq[m']$.

Furthermore, consider two sets $\mathcal{J}_1$, $\mathcal{J}_2$, where $\mathcal{J}_1\subseteq\mathcal{J}_2\subseteq[m']$. Suppose $j\in[m']\setminus \mathcal{J}_2$ and denote by $\mathcal{J}_1'=\mathcal{J}_1\cup\{j\}$ and $\mathcal{J}_2'=\mathcal{J}_2\cup\{j\}$, then
\begin{equation}
	f(\mathcal{J}_1')-f(\mathcal{J}_1)=|\bigcup_{j\in\mathcal{J}_1'}\mathcal{S}_j|-|\bigcup_{j\in\mathcal{J}_1}\mathcal{S}_j|+q(\mathcal{J}_1')-q(\mathcal{J}_1),
\end{equation}
and 
\begin{equation}
	f(\mathcal{J}_2')-f(\mathcal{J}_2)=|\bigcup_{j\in\mathcal{J}_2'}\mathcal{S}_j|-|\bigcup_{j\in\mathcal{J}_2}\mathcal{S}_j|+q(\mathcal{J}_2')-q(\mathcal{J}_2).
\end{equation}

On one hand, suppose $q(\mathcal{J}_1')-q(\mathcal{J}_1)=1$, then $q(\mathcal{J}_2')-q(\mathcal{J}_2)=1$ or $0$; On the other hand, suppose $q(\mathcal{J}_1')-q(\mathcal{J}_1)=0$, then $q(\mathcal{J}_2')-q(\mathcal{J}_2)=0$. Recall that the set coverage function is a submodular function, i.e., $|\bigcup_{j\in\mathcal{J}_1'}\mathcal{S}_j|-|\bigcup_{j\in\mathcal{J}_1}\mathcal{S}_j|\ge|\bigcup_{j\in\mathcal{J}_2'}\mathcal{S}_j|-|\bigcup_{j\in\mathcal{J}_2}\mathcal{S}_j|$. Therefore, we have 
\begin{equation}
	f(\mathcal{J}_1')-f(\mathcal{J}_1)\ge f(\mathcal{J}_2')-f(\mathcal{J}_2),
\end{equation}
which implies that $f(\mathcal{J})$ is a monotonically increasing submodular function.
}

	Because $f(\mathcal{J})$ is a monotonically increasing submodular function, by a similar technique taken in the proof of \cite[Proposition 5.1]{feige1998threshold}, we can show that Algorithm~\ref{greedy} returns a $(1-1/e)$-approximation solution to Problem~\ref{p3}.
\end{proof}

\footnotesize\bibliographystyle{ieeetran}
\bibliography{acc2019}
\end{document}